\makeatletter \@addtoreset{equation}{section}
\makeatletter \@addtoreset{enunciato}{section}
\newcounter{enunciato}[section]
\newtheorem{ittheorem}{Theorem}
\newtheorem{itlemma}{Lemma}
\newtheorem{itproposition}{Proposition}
\newtheorem{itdefinition}{Definition}
\newtheorem{itremark}{Remark}
\newtheorem{itclaim}{Claim}
\newtheorem{itfact}{Fact}
\newtheorem{itconjecture}{Conjecture}
\newtheorem{itcorollary}{Corollary}
\newenvironment{theorem}{\addtocounter{enunciato}{1}
\begin{ittheorem}}{\end{ittheorem}}
\newenvironment{lemma}{\addtocounter{enunciato}{1}
\begin{itlemma}}{\end{itlemma}}
\newenvironment{proposition}{\addtocounter{enunciato}{1}
\begin{itproposition}}{\end{itproposition}}
\newenvironment{definition}{\addtocounter{enunciato}{1}
\begin{itdefinition}}{\end{itdefinition}}
\newenvironment{remark}{\addtocounter{enunciato}{1}
\begin{itremark}}{\end{itremark}}
\newenvironment{conjecture}{\addtocounter{enunciato}{1}
\begin{itconjecture}}{\end{itconjecture}}
\newenvironment{corollary}{\addtocounter{enunciato}{1}
\begin{itcorollary}}{\end{itcorollary}}
\newcommand{\be}[1]{\begin{equation}\label{#1}}
\newcommand{\ee}{\end{equation}}
\newcommand{\bl}[1]{\begin{lemma}\label{#1}}
\newcommand{\el}{\end{lemma}}
\newcommand{\br}[1]{\begin{remark}\label{#1}}
\newcommand{\er}{\end{remark}}
\newcommand{\bt}[1]{\begin{theorem}\label{#1}}
\newcommand{\et}{\end{theorem}}
\newcommand{\bd}[1]{\begin{definition}\label{#1}}
\newcommand{\ed}{\end{definition}}
\newcommand{\bp}[1]{\begin{proposition}\label{#1}}
\newcommand{\ep}{\end{proposition}}
\newcommand{\bc}[1]{\begin{corollary}\label{#1}}
\newcommand{\ec}{\end{corollary}}
\newcommand{\bcj}[1]{\begin{conjecture}\label{#1}}
\newcommand{\ecj}{\end{conjecture}}
\newcommand{\bpr}{\begin{proof}}
\newcommand{\epr}{\end{proof}}
\def \Z {{\mathbb Z}}
\def \R {{\mathbb R}}
\def \N {{\mathbb N}}
\def \ba {\begin{array}}
\def \ea {\end{array}}
\begin{document}


\title{Non-trivial linear bounds for a random walk driven by a simple symmetric exclusion process}

\author{\renewcommand{\thefootnote}{\arabic{footnote}}
R.S.\ dos Santos \footnotemark[1]}

\footnotetext[1]{
Mathematical Institute, Leiden University, P.O.\ Box 9512,
2300 RA Leiden, The Netherlands}

\maketitle

\begin{abstract}
Non-trivial linear bounds are obtained for the displacement of a random walk in a dynamic random environment given
by a one-dimensional simple symmetric exclusion process in equilibrium.
The proof uses an adaptation of multiscale renormalization methods of
Kesten and Sidoravicius \cite{KeSi03}.

\vspace{0.5cm}\noindent
{\it MSC} 2000. Primary 60F15, 60K35, 60K37; Secondary 82B41, 82C22, 82C44.\\
{\it Key words and phrases.} Random walk, dynamic random environment, exclusion process, linear bounds, 
multiscale analysis, percolation.
\end{abstract}


\section{Introduction, results and motivation}
\label{sec:intro}

\subsection{Setup}
\label{subsec:setup}

In this note, we discuss linear scaling properties of a random walk in a dynamic random environment (RWDRE),
where the role of the random environment is taken by a one-dimensional simple symmetric exclusion process (SSEP).
The latter is the c\`adl\`ag Markov process $\xi =(\xi_t)_{t\ge 0}$ with state space $E := \{0,1\}^{\Z}$ whose
infinitesimal generator $\mathcal{L}$ acts on bounded local functions $f$ in the following manner:
\begin{equation}\label{defgeneratorSSEP}
\left(\mathcal{L}f\right)(\eta) := \sum_{x \in \Z} f(\eta^{x,x+1}) -f(\eta)
\end{equation}
where $\eta \in \{0,1\}^{\Z}$ and $\eta^{x,y}$ is defined by
\begin{equation}\label{defetaxy}
\eta^{x,y}(z) = \left\{ 
\begin{array}{rl}
\eta(x) & \text{ if } z = y;\\ 
\eta(y) & \text{ if } z = x;\\
\eta(z) & \text{ otherwise}.\\
\end{array}\right.
\end{equation}
For a detailed description, we refer the reader to Liggett \cite{Li85}, Chapter VIII.
We say that the site $x$ is occupied by a \emph{particle} 
at time $t$ if $\xi_t(x)=1$ and is \emph{vacant} (alternatively, occupied by a \emph{hole}) if $\xi_t(0) = 0$.

For a fixed realization of $\xi$, the random walk in dynamic random environment
$W = (W_t)_{t\ge0}$  is the time-inhomogeneous Markov process that starts at $0$ and,
given that $W_t = x$, jumps to
\begin{equation}\label{defratesW}
\begin{array}{rcl}
x+1 & \text{ with rate } & \alpha_1 \xi_t(x) + \alpha_0 \left[1 -\xi_t(x) \right],\\
x-1 & \text{ with rate } & \beta_1 \xi_t(x) + \beta_0 \left[1 -\xi_t(x) \right],
\end{array}
\end{equation}
where $\alpha_i, \beta_i \in (0,\infty)$, $i \in \{0,1\}$. We will assume that
\begin{equation}\label{constanttotalrates}
\alpha_0 + \beta_0 = \alpha_1+\beta_1 =: \gamma
\end{equation}
and
\begin{equation}\label{constanttotalrates}
v_1 > v_0 \text{ with } v_0 := \alpha_0 - \beta_0  \text{ and } v_1 := \alpha_1 - \beta_1,
\end{equation}
i.e., the total jump rate is constant and equal to $\gamma$,
and the local drift is larger on particles than on holes.
The latter is made w.l.o.g., since the SSEP is invariant
under reflection through the origin. 
We will denote by $\mathbb{P}_{\eta}$ the joint law of $W$ and $\xi$ when $\xi_0 = \eta$.
We will draw $\xi_0$ from a Bernoulli
product measure $\nu_\rho$ with $\rho \in (0,1)$; these are known to be the only
non-trivial extremal invariant measures for the SSEP.

While many results for RWDRE have been obtained in the past few years 
for random environments exhibiting uniform and fast enough mixing (see e.g.\ 
Avena \cite{Avthesis}, and dos Santos \cite{dSathesis}), 
very little is known when the random environment mixes in a non-uniform
way, as happens in the SSEP. For example, there are still no general laws of large numbers available 
for such cases. In particular, for the model described here, the law of large numbers has 
only been proven under the restriction that $v_1>v_0 > 1$ (see Avena, dos Santos and V\"ollering \cite{AvdSaVo12}). 
Another recent result is the paper by den Hollander, Kesten and Sidoravicius \cite{dHoKeSipr}, 
where an approximate law of large numbers is proven when the random environment 
is a high-density Poisson field of independent random walks.

\subsection{Main result}
\label{subsec:mainresult}

It is easy to see, with a coupling argument,
that $W$ lies between two homogeneous random walks with drifts $v_0$ and $v_1$.
In particular, any subsequential limit of $t^{-1}W_{t}$ as $t \to \infty$ lies in the interval $[v_0,v_1]$.
But would it be possible, even along a subsequence, for $W$ to travel at one of the extremal speeds?
For the case of the SSEP, the following theorem answers this question in the negative.

\begin{theorem}\label{mainthm}
For any $\rho \in (0,1)$, there exist $v_-, v_+ \in (v_0,v_1)$ such that
\begin{equation}\label{mainthmeq}
v_- \le \liminf_{t \to \infty}t^{-1}W_t \le \limsup_{t \to \infty} t^{-1}W_t \le v_+ \quad \mathbb{P}_{\nu_\rho} \text{-a.s.}
\end{equation} 
\end{theorem}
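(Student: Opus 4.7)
The SSEP is invariant under the combined map $\xi\mapsto 1-\xi$ and $x\mapsto -x$, which in \eqref{defratesW} interchanges particles with holes and $v_0$ with $v_1$; applied to $W$ it relates the law of $v_1 t-W_t$ under $\P_{\nu_\rho}$ to that of $W_t - v_0 t$ under $\P_{\nu_{1-\rho}}$ in a model satisfying the same assumptions. Consequently it suffices to prove the upper bound $\limsup_t t^{-1}W_t\le v_+$ for some $v_+<v_1$. This upper bound is in turn implied by the following deterministic mechanism: if the asymptotic fraction of time $W$ sits on a hole is bounded below by some $\delta>0$, then, since the local drift equals $v_1$ on particles and $v_0$ on holes, one automatically obtains $\limsup_t t^{-1}W_t\le v_0\delta+v_1(1-\delta)=:v_+<v_1$.

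\textbf{Multiscale renormalization.} I adopt the Kesten--Sidoravicius multiscale scheme. Fix scales $L_{k+1}=\lfloor L_k^\alpha\rfloor$ with $\alpha>1$, and at each scale $k$ tile space-time by boxes $B_k$ of spatial width $\asymp L_k$ and temporal height $\asymp L_k/v_1$, slanted along the candidate ballistic direction of the walker at speed $v_1$. Declare $B_k$ \emph{bad} if there is an admissible trajectory for $W$ inside $B_k$ whose average speed exceeds a threshold $v^\ast<v_1$ while encountering holes in a fraction smaller than some $\delta_k$, where $\delta_k\downarrow\delta_\infty>0$; otherwise call it \emph{good}. For $W$ to cross a bad box at scale $k+1$ at speed $\ge v^\ast$ it must traverse $\asymp L_{k+1}/L_k$ consecutive bad sub-boxes at scale $k$. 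Using the graphical (stirring) construction of the SSEP and its duality with coalescing symmetric random walks, together with a sprinkling argument that converts the SSEP's polynomial covariance decay into near-independence between well-separated sub-boxes, one obtains a recursion of the form
\[
p_{k+1}\;\le\;C_k\,p_k^{m_k}+\varepsilon_k,\qquad p_k:=\P\bigl(B_k^{(0)}\text{ is bad}\bigr),\quad m_k\asymp L_{k+1}/L_k.
\]
The base step is handled by taking $L_0$ large enough that SSEP concentration forces $p_0\ll 1$ (this uses crucially that $\rho\in(0,1)$, so holes appear with positive density), after which the recursion drives $p_k$ to $0$ stretched-exponentially.

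\textbf{Conclusion and main obstacle.} A Borel--Cantelli argument along $(L_k)$ then shows that, $\P_{\nu_\rho}$-a.s., only finitely many boxes visited by the trajectory of $W$ are bad, so the asymptotic fraction of time spent on holes is at least $\delta_\infty$, yielding the required $\limsup_t t^{-1}W_t\le v_+$. The principal obstacle, and the reason a direct mixing argument fails, is the SSEP's non-uniform mixing: because particles are conserved, the covariance $\operatorname{Cov}(\xi_s(x),\xi_t(y))$ decays only polynomially (like $|t-s|^{-1/2}$ via the coalescing-walk representation) and is slowest along the characteristic direction $v=0$. The entire multiscale argument hinges on (i) choosing the geometry of $B_k$ so that the ballistic direction of $W$ (with $v_1\neq 0$) stays transverse to this slow characteristic, and (ii) absorbing the residual polynomial correlations through sprinkling at every scale, so that the induced oriented percolation of bad boxes remains subcritical uniformly in $k$. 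Carrying out this book-keeping while keeping $\delta_k$ bounded away from $0$ is the heart of the proof.
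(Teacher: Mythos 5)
Your symmetry reduction and the decision to attack this via Kesten--Sidoravicius multiscale renormalization are exactly the paper's first moves, and the high-level idea (force the walker to spend a positive fraction of time on the minority species) is also the paper's. But your notion of ``bad'' differs from the paper's in a way that makes the recursion substantially harder to justify, and you are missing a whole component of the control. You define a box to be bad if there exists an admissible trajectory crossing it fast while rarely seeing holes. The paper instead defines badness purely in terms of the environment: an $r$-block is \emph{rarefied} if some spatial window of width $\omega_r$ inside an enlarged superblock is low on particles, and \emph{turbulent} if the configuration changes at some integer space-time point in it within a short time $\epsilon_r$. This decoupling of the walker from the badness condition is what makes the scheme tractable: one shows that \emph{any} space-time path of length $\ell$ can intersect at most $o(\ell/\Delta_r^2)$ rough $r$-blocks, and then uses the walker's uniformly positive jump rates to show that at every integer time spent in a smooth block it registers a particle (or hole) step with probability $\ge\delta_*$. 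With your trajectory-based badness, the claim that crossing a bad $(k+1)$-box at speed $\ge v^*$ forces $\asymp L_{k+1}/L_k$ bad sub-boxes is not self-evident (a fast trajectory can zigzag through good sub-boxes) and the recursion $p_{k+1}\le C_k p_k^{m_k}+\varepsilon_k$ would need a real proof.

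You have also omitted the \emph{turbulence} control entirely. Even if holes are present near the walker, if the environment updates at that site before the walker can react, it cannot register a hole-step; the paper handles this by a separate percolative system of ``non-$r$-stuck'' blocks with super-exponentially small density (Lemma 5.6). Further, the probabilistic engine that makes the rarefaction recursion work is Liggett's comparison of symmetric exclusion with a system of \emph{independent} random walkers (Liggett, Interacting Particle Systems, Chapter VIII, Prop.~1.7), which yields exponential moment bounds on window counts given sufficient density at the base of the superblock; it is not coalescing-walk duality, and the paper uses no sprinkling. Finally, your requirement that boxes be slanted so the walker stays ``transverse to the slow characteristic'' implicitly assumes $v_1\ne 0$, which the theorem does not; the paper works with un-slanted square $\Delta_r\times\Delta_r$ blocks and no hypothesis on the sign of $v_0,v_1$.
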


While this result is ``intuitively obvious'', it does not seem a trivial fact to prove. 
For dynamic random environments consisting of single-site
spin-flips with bounded flip rates, there is a simple proof strategy
since particles and holes can be found locally ``around'' the random walk.
For the supercritical contact process, the proof
by den Hollander and dos Santos \cite{dHodSapr} that $W$ cannot travel with speed $v_0$ is already non-trivial and relies 
on model-specific features. The proof of Theorem~\ref{mainthm} given
here is based on the \emph{multiscale analysis} scheme put forth by Kesten in Sidoravicius \cite{KeSi03}, 
and seems exceedingly heavy for such a simple fact.
It has however the advantage of being easier to generalize;
while several technical facts are verified here
specifically for the SSEP, the overall proof
strategy should work in much greater generality. 
For example, the analogous result for the supercritical contact process
can be reobtained with this approach.

\subsection{Essential enhancements}
\label{sec:essenh}

Our question can also be formulated in terms of \emph{essential enhancements}, 
in analogy with percolation theory (see e.g.\ Grimmett \cite{Gr99}, Chapter 3).
From this point of view, $W$ is seen as a perturbation of a homogeneous random walk with drift $v_0$,
and $\rho$ is the intensity of the perturbation.
The question then becomes: is this perturbation, for any $\rho >0$, an ``essential enhancement''
in the sense that it changes the linear scaling of $W$?

Let us look at what can happen for random walks in \emph{static} 
one-dimensional random environments.
For these models, there are criteria for recurrence/transience
as well as laws of large numbers proven under very general assumptions (see e.g.\ Zeitouni \cite{Ze04}).
If $v_0 = 0 < v_1$, then the random walk is always transient to the right in any ergodic
random environment with a positive density of particles.
But random walks in static random environments can exhibit \emph{slow-down} phenomena;
for example, there are regimes where the random walk can be transient to the right
with zero speed. 
In the case of i.i.d.\ static random environments, the latter can only happen when $v_0 < 0 < v_1$.
Therefore, if $v_0 = 0 < v_1$, then the perturbation given by a static i.i.d.\
random environment is always an essential enhancement, as long as the density of $1$'s is positive.

Consider, however, the following example of a stationary and ergodic static random environment 
with positive particle density that does \emph{not} result in an essential enhancement. 
Let $L$ be an $\N$-valued random variable with finite first moment but infinite second moment.
Partition $\Z$ into intervals in a translation-invariant way such that the length of
each interval is independent and distributed as $L$. Let $\eta$ be obtained by coloring
each interval with $1$'s or $0$'s according to independent fair coin tosses.
On top of this static random environment, put a random walk with $\beta_0 = \alpha_0 = 1/2$,
$\beta_1 = 0$ and $\alpha_1 = 1$. 
As discussed above, this random walk is transient to the right; 
therefore, it eventually reaches a point where there is an 
interval full of $1$'s to its left (into which it cannot backtrack) 
and an interval to its right whose law is still independent of the past. 
In other words, the times when $W$ crosses the boundary between an interval full of $1$'s 
and the next interval are \emph{regeneration times}.
This observation allows us to estimate the speed of $W$ by a constant times the ratio between the expectation of $L$ and the expected time
required by $W$ to cross one interval, given that the interval to the left is occupied. 
The latter turns out to be infinite, so that $W$ has speed $0 = v_0$.
Therefore, in this example the random environment is not an essential enhancement, 
despite having particle density equal to $1/2$.

\subsection{Outline}
\label{subsec:outline}

The rest of the paper is organized as follows.
In Section~\ref{sec:construction}, we construct particular versions of the SSEP and of the random walk.
In Section~\ref{sec:proofmainthm}, we give the proof of Theorem~\ref{mainthm} with
the help of a proposition (Proposition~\ref{mainprop} below) concerning
\emph{rarefied} and \emph{turbulent} regions in the SSEP.
In Section~\ref{sec:PPS}, we lay out the basic tools that will be used 
to prove Proposition~\ref{mainprop} in Section~\ref{sec:proofmainprop}, 
where all constructions and estimates specific to the SSEP are carried out.

\section{Construction of the model}
\label{sec:construction}

In Section~\ref{subsec:constructionSSEP} we construct the SSEP and, in
Section~\ref{subsec:constructionRW}, the random walk on top of the SSEP.

\subsection{Graphical construction of the SSEP}
\label{subsec:constructionSSEP}

It will be convenient to have a graphical construction of the SSEP including negative times.
Let $\mathcal{E}$ be the set of edges of $\Z$, i.e., all unordered pairs of neighbouring sites, 
and let $\mathcal{A} = \left(\mathcal{A}_e \right)_{e \in \mathcal{E}}$ be a collection of independent Poisson point processes on $\R$
with intensity $1$. Draw each event of $\mathcal{A}_e$ in space-time as an arrow between the two sites
connected by $e$. This gives rise to a system of random paths in $\Z \times \R$ as follows.
For each $(x,t) \in \Z \times \R$, there exists a.s.\ a unique doubly infinite right-continuous path that goes either vertically in time
or (forcibly) across arrows of $\mathcal{A}$. For $s \in \R$, let $\zeta^t_s(x)$ denote the position of this path at time $s$.

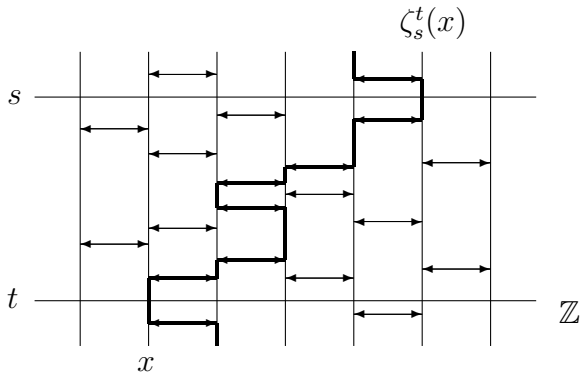
\begin{figure}[hbtp]
\vspace{1cm}
\begin{center}
\setlength{\unitlength}{0.3cm}
\begin{picture}(20,10)(0,0)
\put(0,1){\line(22,0){22}} \put(0,10){\line(22,0){22}}

\put(2,-1){\line(0,1){13}}
\put(5,-1){\line(0,1){13}} \put(8,-1){\line(0,1){13}}
\put(11,-1){\line(0,1){13}} \put(14,-1){\line(0,1){13}}
\put(17,-1){\line(0,1){13}} \put(20,-1){\line(0,1){13}}

{\linethickness{0.05cm}
\put(8,-1){\line(0,1){1}} \put(8,0){\line(-1,0){3}}
\put(5,0){\line(0,1){2}} \put(5,2){\line(1,0){3}}
\put(8,2){\line(0,1){0.8}} \put(8,2.8){\line(1,0){3}}
\put(11,2.8){\line(0,1){2.3}} \put(11,5.1){\line(-1,0){3}}
\put(8,5.1){\line(0,1){1.1}} \put(8,6.2){\line(1,0){3}}
\put(11,6.2){\line(0,1){0.7}} \put(11,6.9){\line(1,0){3}}
\put(14,6.9){\line(0,1){2.1}} \put(14,9){\line(1,0){3}}
\put(17,9){\line(0,1){1.8}} \put(17,10.8){\line(-1,0){3}}
\put(14,10.8){\line(0,1){1.2}}
}

\put(5,0){\vector(1,0){3}}\put(8,0){\vector(-1,0){3}}
\put(5,2){\vector(1,0){3}}\put(8,2){\vector(-1,0){3}}
\put(8,2.8){\vector(1,0){3}}\put(11,2.8){\vector(-1,0){3}}
\put(8,5.1){\vector(1,0){3}}\put(11,5.1){\vector(-1,0){3}}
\put(8,6.2){\vector(1,0){3}}\put(11,6.2){\vector(-1,0){3}}
\put(11,6.9){\vector(1,0){3}}\put(14,6.9){\vector(-1,0){3}}
\put(14,9){\vector(1,0){3}}\put(17,9){\vector(-1,0){3}}
\put(14,10.8){\vector(1,0){3}}\put(17,10.8){\vector(-1,0){3}}

\put(2,3.5){\vector(1,0){3}}\put(5,3.5){\vector(-1,0){3}}
\put(2,8.6){\vector(1,0){3}}\put(5,8.6){\vector(-1,0){3}}
\put(5,4.2){\vector(1,0){3}}\put(8,4.2){\vector(-1,0){3}}
\put(5,7.5){\vector(1,0){3}}\put(8,7.5){\vector(-1,0){3}}
\put(5,11){\vector(1,0){3}}\put(8,11){\vector(-1,0){3}}
\put(8,9.2){\vector(1,0){3}}\put(11,9.2){\vector(-1,0){3}}
\put(11,2){\vector(1,0){3}}\put(14,2){\vector(-1,0){3}}
\put(11,5.7){\vector(1,0){3}}\put(14,5.7){\vector(-1,0){3}}
\put(14,0.4){\vector(1,0){3}}\put(17,0.4){\vector(-1,0){3}}
\put(14,4.5){\vector(1,0){3}}\put(17,4.5){\vector(-1,0){3}}
\put(17,2.4){\vector(1,0){3}}\put(20,2.4){\vector(-1,0){3}}
\put(17,7.1){\vector(1,0){3}}\put(20,7.1){\vector(-1,0){3}}

\put(16,12.9){$\zeta_s^t(x)$} \put(4.5,-2.1){$x$}
\put(-1.2,0.7){$t$}
\put(-1.2,9.7){$s$}
\put(23,0){$\mathbb{Z}$}
\end{picture}
\end{center}
\caption{\small Graphical representation. The arrows represent
events of $\mathcal{A}$. The thick lines mark
the path $\zeta_s^t(x)$.} \label{graphrep}
\end{figure}

Given $\eta \in \{0,1\}^{\Z}$, we will define the SSEP $\xi = (\xi_t)_{t \in \R}$ by
\begin{equation}\label{defSSEP}
\xi_t(x) := \eta(\zeta^t_0(x)),
\end{equation}
i.e., a space-time point $(x,t)$ is occupied if and only if the path going through it hits an occupied site at time $0$.
If we take $\eta$ to be distributed as $\nu_\rho$, $\rho \in (0,1)$, then we may check that this construction indeed results in
a stationary process with the correct distribution. 
To verify this, we only need to note that $\xi_t(x) = \xi_s(\zeta^t_s(x))$ for any $s,t \in \R$  
and that, by the product structure and exchangeability of $\nu_\rho$, $\xi_s$ is independent of $\mathcal{A}$.

\subsection{The random walk on top of the SSEP}
\label{subsec:constructionRW}
We next give a particular construction of the random walk model described in the introduction.
Take a Poisson process $N = (N_t)_{t \ge 0}$ with rate $\gamma$
and two sequences $J^1=(J^1_k)_{k \in \N}$ and $J^0=(J^0_k)_{k \in \N}$
of i.i.d.\ $\{-1,+1\}$-valued random variables taking the value $+1$ with probability $\alpha_1/\gamma$ and $\alpha_0/\gamma$
, respectively. These random variables are taken such that $\xi,N,J^1,J^0$ are jointly independent.

The random walk $W$ is a functional of $(\xi,N,J^1,J^0)$ obtained as follows.
We set $W_0 :=0$. At a time $t>0$, $W$ jumps if and only if $N$ jumps, 
and the increment is given by $W_t -W_{t-} = J^i_{N_t}$,
where $i = \xi_t(W_{t-})$ is the state of the exclusion process 
at the position of $W$ just before the jump.

Setting
\begin{equation}\label{defjumpsonpartholes}
\begin{array}{rcl}
N^{1}_t & := & \# \{t \in [0,t] \colon\, W_t \neq W_{t-} \text{ and } \xi_{t}(W_{t-}) = 1 \}, \\
N^{0}_t & := & \# \{t \in [0,t] \colon\, W_t \neq W_{t-} \text{ and } \xi_{t}(W_{t-}) = 0\}, \\
\end{array}
\end{equation}
then $N^0_t + N^1_t = N_t$ and we see that $W$ has the following representation:
\begin{equation}\label{representationRW}
W_t = S^{1}_{N^{1}_t} + S^{0}_{N^{0}_t}
\end{equation}
where $(S^{i}_n)_{n \in \N_0}$, $i \in \{0,1\}$, are discrete-time simple random walks that jump to the right with probability $\alpha_i/\gamma$. 
From this we immediately get
\begin{equation}\label{speedvsdensity}
\begin{array}{rcl}
\liminf_{t \to \infty} t^{-1}W_t & = & v_0 + (v_1 - v_0) \liminf_{t \to \infty} (\gamma t)^{-1}N^1_t,\\
\limsup_{t \to \infty} t^{-1}W_t & = & v_1 - (v_1 - v_0) \liminf_{t \to \infty} (\gamma t)^{-1}N^0_t.
\end{array}
\end{equation}

\section{Proof of Theorem~\ref{mainthm}}
\label{sec:proofmainthm}

Since the \emph{holes} of a SSEP under $\mathbb{P}_{\nu_\rho}$
have the same distribution as the \emph{particles} of a SSEP under $\mathbb{P}_{\nu_{1-\rho}}$,
we may w.l.o.g.\ restrict ourselves to proving the statement for the $\liminf$ in \eqref{mainthmeq}.

The main idea in the proof of Theorem~\ref{mainthm} is that, because the jump rates are positive and bounded, 
the random walk can spend time on top of particles whenever
it is in a region of the environment that is not too rough, namely, neither too rarefied nor too turbulent.
A \emph{rarefied} region is one where the density of the environment is atypically low. 
A \emph{turbulent} region is one where the environment is moving atypically fast.
It is of course not possible to control such deviations of the environment in all space and time simultaneously,
but, as we will see in Proposition~\ref{mainprop} below, it is possible to show that, 
in most of the regions \emph{accessible} to the random walk, the environment 
cannot deviate too much from its typical behaviour.

In Section~\ref{subsec:rareturb} we state Proposition~\ref{mainprop}. 
In Section~\ref{subsec:proofmainthm}, we use this proposition to prove Theorem~\ref{mainthm}. 
The proof of Proposition~\ref{mainprop} is given in Section~\ref{sec:proofmainprop}.

\subsection{Rarefied and turbulent regions}
\label{subsec:rareturb}

For $r \in \N$, let $\omega_r \le \Delta_r \in \N$ and $\rho_r, \epsilon_r \in (0,1)$ be given parameters.
Let
\begin{equation}\label{defrblocks} 
B_r(k,s) := [k,k+\Delta_r)\times[s,s+\Delta_r), \quad k,s \in \Delta_r\Z,
\end{equation}
be blocks in $\R^d$ with side length $\Delta_r$, called $r$-blocks.
For $x \in \Z$ and $t \in \R$, we write
\begin{equation}\label{defsum}
 \Sigma_r^x(\xi_t) := \sum_{y \in [x,x+\omega_r)} \xi_t(y)
\end{equation}
to denote the number of particles present in $[x,x+\omega_r)$ at time $t$.
We call a set $A \subset \R^2$ $r$-\emph{rarefied} if there exists
 $(x,t) \in \Z^2$ with $[x,x+\omega_r)\times\{t\} \subset A$ and such that
 $\Sigma_r^x(\xi_t) < \rho_r \omega_r$. We call $A$ $r$-\emph{turbulent} if there exists $(x,t) \in A \cap \Z^2$ 
and $s \in (0,\epsilon_r)$ such that $\xi_{t+s}(x) \neq \xi_t(x)$.

For $\ell \in (0,\infty)$, let
\begin{align}\label{defcurlyW1}
\mathcal{W}_\ell := \{\text{all paths } & \text{in } \R^2 \text{ starting at } 0 \text{ which are continuous, }\nonumber \\ 
& \;\; \text{piecewise }C^1, \text{ and have length at most } \ell\},
\end{align}
and put
\begin{equation}\label{defPhis}
\begin{array}{rcl}
\Phi^{\mathrm{r}}_r(\ell) & := & \sup_{w \in \mathcal{W}_{\ell}} \#\{r\text{-rarefied } r\text{-blocks intersected by } w \},\\
\Phi^{\mathrm{t}}_r(\ell) & := & \sup_{w \in \mathcal{W}_{\ell}} \#\{r\text{-turbulent } r\text{-blocks intersected by } w \}.
\end{array}
\end{equation}
The key ingredient in the proof of Theorem~\ref{mainthm} is the following proposition.
\begin{proposition}\label{mainprop}
For any $\rho \in (0,1)$, 
there exist $(\Delta_r,\omega_r,\rho_r,\epsilon_r)_{r\in\N}$
as above such that, $\mathbb{P}_{\nu_\rho}$-a.s.,
\begin{equation}\label{mainpropeq}
\begin{array}{rl}
\text{(a) } & \lim_{r \to \infty} \limsup_{\ell \to \infty} \ell^{-1} \Delta^2_r\Phi^{\mathrm{r}}_r(\ell) = 0,\\
\text{(b) } & \lim_{r \to \infty} \limsup_{\ell \to \infty} \ell^{-1} \Delta^2_r\Phi^{\mathrm{t}}_r(\ell) = 0.
\end{array}
\end{equation}
\end{proposition}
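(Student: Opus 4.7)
The plan is to prove Proposition~\ref{mainprop} in two stages. First, bound the probability that a single $r$-block is $r$-rarefied (resp.\ $r$-turbulent) via elementary estimates on $\nu_\rho$ and the graphical Poisson construction of the SSEP. Second, upgrade these single-block estimates into a uniform bound on $\Phi_r^{\mathrm r}(\ell)$ and $\Phi_r^{\mathrm t}(\ell)$ over all piecewise $C^1$ paths of length $\ell$, by means of a multiscale renormalization argument in the spirit of Kesten-Sidoravicius~\cite{KeSi03}.

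\textbf{Step 1 (single-block bounds).} By stationarity, $\xi_t$ has law $\nu_\rho$ for every $t \in \mathbb{R}$, so $\Sigma_r^x(\xi_t)$ is $\mathrm{Binomial}(\omega_r,\rho)$-distributed and Hoeffding's inequality gives, for $\rho_r < \rho$,
\[
\mathbb{P}_{\nu_\rho}\bigl(\Sigma_r^x(\xi_t) < \rho_r \omega_r\bigr) \le \exp\bigl(-2(\rho-\rho_r)^2 \omega_r\bigr).
\]
A union bound over the at most $\Delta_r^2$ integer pairs $(x,t)$ inside a fixed $r$-block thus bounds the probability it is $r$-rarefied by $\Delta_r^2 e^{-2(\rho-\rho_r)^2 \omega_r}$. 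For turbulence, the graphical construction of Section~\ref{subsec:constructionSSEP} shows that $\xi_\cdot(x)$ is constant on $[t,t+\epsilon_r]$ whenever none of the Poisson arrows on the edges $\{x-1,x\}$ and $\{x,x+1\}$ fire during this interval, an event of probability at least $e^{-2\epsilon_r} \ge 1-2\epsilon_r$. A second union bound bounds the probability a fixed $r$-block is $r$-turbulent by $2\Delta_r^2\epsilon_r$. One then picks $(\Delta_r,\omega_r,\rho_r,\epsilon_r)_{r\in\mathbb{N}}$ so that both single-block bad probabilities decay faster than any negative power of $\Delta_r$; a concrete choice is $\Delta_r=2^r$, $\rho_r\nearrow\rho$ slowly, $\omega_r=\lfloor\Delta_r^{3/4}\rfloor$, and $\epsilon_r=\Delta_r^{-K}$ for $K$ large.

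\textbf{Step 2 (path-level control).} This is the heart of the argument. Since a piecewise $C^1$ path of length $\ell$ can intersect $\Omega(\ell/\Delta_r)$ distinct $r$-blocks, a per-block union bound alone only yields $\ell^{-1}\Delta_r^2\Phi_r(\ell)=O(\Delta_r)$, which does not vanish. The required improvement is that the bad $r$-blocks, even though they have positive density, form a sufficiently sparse pattern in space-time that no path, however adversarial, can accumulate too many of them. I would import the multiscale renormalization of Kesten-Sidoravicius, recursively declaring an $r$-block to be \emph{good} when its sub-blocks at scale $r-1$ are mostly good and their bad sub-blocks form only small, well-separated clusters. The base case is provided by Step 1; induction on $r$ then propagates very strong decay of the single-block bad probability together with exponential tails on the diameter of bad-block clusters at scale $r$. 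A geometric, travelling-salesman-type argument then produces an almost-sure bound of the form $\Phi_r(\ell)\le\psi_r\,\ell/\Delta_r$ with $\Delta_r\psi_r\to 0$ as $r\to\infty$, which is precisely what \eqref{mainpropeq} requires once one combines it with a Borel-Cantelli argument along a polynomial sequence of lengths $\ell_n$.

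\textbf{Main obstacle.} The supremum defining $\Phi_r(\ell)$ is over all paths of length $\ell$ and may be chosen as a function of the environment, so a naive density argument fails: the number of bad $r$-blocks inside the ball of radius $\ell$ swept by the path is $\sim p_r\ell^2/\Delta_r^2$, which for any fixed $r$ eventually dwarfs the target $o(\ell/\Delta_r^2)$. The mechanism that must save the day is a separation of bad clusters by distances of order $\Delta_r/\sqrt{p_r}\gg\Delta_r$, which forces the path to spend most of its length travelling between clusters rather than inside them. Making this separation quantitative for the SSEP, whose conservation law produces long-range spatial correlations that do not decay uniformly in time, is the technically delicate point, and is precisely where the graphical construction of Section~\ref{subsec:constructionSSEP} and the machinery set up in Section~\ref{sec:PPS} will have to be exploited; this is the content of Section~\ref{sec:proofmainprop}.
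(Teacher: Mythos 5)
Your Step 1 is essentially the paper's starting point (compare equation \eqref{nobigrarefeq1} and \eqref{pnobigturbeq2}), and you have correctly located the obstacle in Step 2: the supremum over adversarial paths together with the lack of independence across blocks. However, your proposal stops precisely at the crux. You write that one should ``recursively declare an $r$-block to be good when its sub-blocks at scale $r-1$ are mostly good and their bad sub-blocks form only small, well-separated clusters,'' but this does not address the specific difficulty for the SSEP: because particles are conserved, the event ``$B_r(k,s)$ contains a low-density window'' is correlated with the same event in blocks arbitrarily far away in space, so the block indicators you want to renormalize are not close to any product measure and do not satisfy the finite-chromatic-number independence hypothesis that Proposition~\ref{prop:PPSseq} (and Lemma~\ref{lemma:tailPsi}) requires. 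You acknowledge this (``making this separation quantitative for the SSEP\dots is the technically delicate point'') but then defer it to ``the content of Section~\ref{sec:proofmainprop},'' which is exactly the part you were asked to supply.

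The missing device in the paper is the notion of a \emph{locally spoiled} block (Section~\ref{subsubsec:locspoilblocks}): an $(r+1)$-block is locally spoiled if the base of its superblock is $(r+1)$-dense yet, after running only the graphical representation restricted to the interior of that superblock, some window at scale $\omega_r$ in its neighbourhood is too sparse. This event depends only on $\xi$ at the base time and on the Poisson arrows inside $\mathbf{B}_{r+1}(k,s)$, which is what gives the conditional independence needed for the $\mathcal{P}_r$-partitioned domination of Lemma~\ref{lemma:stochdoma}. The recursion then runs \emph{from fine to coarse}, not coarse to fine as you describe: a bad $r$-block forces its parent $(r+1)$-block to be bad or locally spoiled (inequality \eqref{pmainpropaeq3}), and iterating up to $R=\lfloor\kappa\log\ell\rfloor$ lands on a scale where Lemma~\ref{lemma:nobigraref} guarantees no bad blocks at all; Proposition~\ref{prop:PPSseq} then controls the accumulated locally-spoiled contribution. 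The quantitative heart of Lemma~\ref{lemma:stochdoma} is a comparison of the SSEP holes with a system of independent random walks via Liggett's negative-correlation result, which your sketch does not mention and cannot be replaced by the Hoeffding bound of your Step 1, since one must track how density degrades from scale $\omega_{r+1}$ to scale $\omega_r$ under the dynamics, not just in equilibrium. Finally, your suggested parameters $\rho_r\nearrow\rho$ and $\omega_r=\lfloor\Delta_r^{3/4}\rfloor$ will not serve: the recursion needs $\rho_r$ to increase to a limit strictly below $\rho$ (as in \eqref{defrhoinfty}--\eqref{defbarrhor}) so that the per-scale density loss $\rho_{r+1}-\rho_r$ stays summable while $(\rho-\rho_r)$ stays bounded away from zero, and $\omega_r$ must be polynomially far below $\Delta_r$ (the paper takes $\omega_r=\Delta_r^{1/6}$) to make the random-walk comparison error terms in Lemma~\ref{lemma:ISRW}(ii) small.
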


Part (a) will be proved using a \emph{multiscale renormalization scheme}
developed by Kesten and Sidoravicius (see \cite{KeSi03}; we also borrow some ideas from \cite{KeSi05}).
The adaptation is straightforward, and some simplifications are possible in our setting. 
Nevertheless, for completeness, we include all the details.
The main new ingredient is a comparison between the SSEP and a system of independent random walkers, which is due to Liggett.
The proof of part (b) uses a similar strategy, but is much simpler.

To simplify the exposition, we present the proof in dimension one only. 
There are no technical issues to extend it to higher dimensions. Small complications
arise in the proof of Lemma~\ref{lemma:stochdoma} below, but they can be dealt with straightforwardly.

\subsection{Proof of Theorem~\ref{mainthm}}
\label{subsec:proofmainthm}
\begin{proof}
Fix $\rho \in (0,1)$ and recall the definition of $N^1$ in \eqref{defjumpsonpartholes}.
By \eqref{speedvsdensity}, it is enough to prove the existence of a $\delta_0 > 0$ such that
\begin{equation}\label{pmainthmeq2}
\liminf_{t \to \infty}t^{-1}N^{1}_t \ge \delta_0 \;\; \mathbb{P}_{\nu_\rho}\text{-a.s.}
\end{equation}

Regard $(W_s)_{s \in[0,t]}$ as a path in $\R^2$ and denote its length by $\ell_t = t + N_t$. 
Recall that $N$ is a Poisson process with rate $\gamma > 0$, independent of $\xi$.
Using Proposition~\ref{mainprop}, fix $\ell_* \in (1,\infty)$ and $r_* \in \N$ such that
\begin{equation}\label{pmainthmeq4}
\Delta^2_{r_*} \left\{ \Phi^{\mathrm{r}}_{r_*}(\ell) + \Phi^{\mathrm{t}}_{r_*}(\ell) \right\} \le \frac{\ell}{2(1+\gamma)} \;\;\; \mathbb{P}_{\nu_\rho}\text{-a.s. } \forall \; \ell \ge \ell_*.
\end{equation}

Let $B^*_t(W)$ be the unique $r_*$-block containing the spacetime point $(W_t,t)$.
We call $B_t^*(W)$ \emph{rough} if it is either $r_*$-rarified or turbulent, and call it \emph{smooth} otherwise.
For $t \ge 0$, let
\begin{equation}\label{pmainthmeq5}
\Theta^*_t(W):= \sum_{s=0}^{\lfloor t \rfloor} \mathbbm{1}_{ \{B_s^*(W) \text{ is rough}\}}
\end{equation}
denote the total number of integer times between $0$ and $t$ at which $W$ is inside a rough block. 
Since $W$ can spend at most $\Delta_{r_*}$ time units in each rough block, 
if $t \ge \ell_*$, then by \eqref{pmainthmeq4} we have
\begin{align}\label{pmainthmeq6}
\Theta^*_t(W) 
& \le \Delta_{r_*} \left\{ \Phi^{\mathrm{r}}_{r_*}(\ell_t) + \Phi^{\mathrm{t}}_{r_*}(\ell_t) \right\}\nonumber\\
& \le \frac{1}{\Delta_{r_*}}\frac{\ell_t}{2(1+\gamma)} \le \frac{\ell_t}{2(1 + \gamma)} \quad \; \mathbb{P}_{\nu_\rho}\text{-a.s.}
\end{align}

For $s \in \N_0$, let
\begin{equation}\label{pmainthmeq7}
Y_{s+1} := \mathbbm{1}_{\{N^{1}_{s+1}>N^{1}_s\}}.
\end{equation}
Note that $N^{1}_{s+1} > N^{1}_s$ if and only if 
$W$ jumps at least once from a particle in the time interval $(s,s+1]$.
Since $W$ has uniformly positive jump rates, 
for any $s \ge0$, $r \in \N$, $\epsilon > 0$ and $j \in [W_s-r,W_s+r]$,
\begin{equation}\label{pmainthmeq8}
\mathbb{P}_{\nu_\rho} \left( W \text{ jumps once from } j \text{ in the time interval } (s,s+ \epsilon)\mid (W_u)_{u \in [0,s]}, \xi \right)
\ge \delta
\end{equation}
for some $\delta = \delta(r,\epsilon)>0$. Therefore, if $B^*_s(W)$ is smooth, then
\begin{equation}\label{pmainthmeq9}
\mathbb{P}_{\nu_\rho}\left(Y_{s+1} = 1 \mid (W_u)_{u \in [0,s]}, \xi \right) \ge \delta_* := \delta(r_*, \epsilon_{r_*})
\end{equation}
since there is at time $s$ at least one particle in $[W_s - r_*, W_s+ r_*]$ 
that does not move before time $s+\epsilon_{r_*}$. Therefore we can couple $Y$ with 
an i.i.d.\ sequence $(\tilde{Y}_{s})_{s \in \N}$ of Bernoulli($\delta_*$) random variables such that $Y_{s+1} \ge \tilde{Y}_{s+1}$
if $B^*_{s}(W)$ is smooth. 

Using these observations, we can write, for $t  \ge \ell_*$,
\begin{align}\label{pmainthmeq10}
t^{-1}N^1_t
& \ge t^{-1} \sum_{s=1}^{\lfloor t \rfloor}Y_{s} 
 \ge \;\; t^{-1} \sum_{\stackrel{s \in [1,t]\cap \N \colon}{\small{B^*_{s-1}(W)} \small{\text{ is smooth}}}} \tilde{Y}_{s} \nonumber\\
& \ge \left(\frac{\lfloor t \rfloor - \Theta^*_t(W)}{t}\right)\; \#\left\{\stackrel{s \in [1,t]\cap \N \colon}{B^*_{s-1}(W) \tiny{\text{ is smooth}}}\right\}^{-1}
\sum_{\stackrel{s \in [1,t]\cap \N \colon}{B^*_{s-1}(W) \text{ is smooth}}} \tilde{Y}_{s}.
\end{align}
By \eqref{pmainthmeq6}, the $\liminf$ as $t \to \infty$ of the term in parentheses in the r.h.s.\ of \eqref{pmainthmeq10}
is at least $1/2$. The remaining term converges to $\delta_*$, since the number of integer times $s$ in $[1,t]$ for which $B_{s-1}^*(W)$ is smooth is unbounded.
Thus \eqref{pmainthmeq2} holds with $\delta_0 = \delta_*/2$.
\end{proof}

\section{Block percolation and partitioned systems}
\label{sec:PPS}

In this section we present a percolation result, Proposition~\ref{prop:PPSseq} below,
which will play an important role in the proof of Proposition~\ref{mainprop} in Section~\ref{sec:proofmainprop}.

\subsection{Percolative systems}
\label{subsec:percsys}

Fix $d \in \N \setminus \{1\}$ and $\Delta \in (0,\infty)$. 
For $k = (k_1,\ldots,k_d) \in \Delta \Z^d$, let
\begin{equation}\label{defB}
B_{\Delta}(k) := \prod_{i=1}^{d}[k_i, k_i + \Delta)
\end{equation}
be the block in $\R^d$ of side length $\Delta$ with lower-left corner at $k$.
A collection of random variables
\begin{equation}\label{defPS}
\Upsilon = (\Upsilon(k))_{k\in \Delta \Z^d }, \;\; \Upsilon(k) \in \{0,1\} \text{ for each } k \in \Delta\Z^d,
\end{equation}
is called call a \emph{percolative system} (PS) with scale $\Delta$.
We interpret $\Upsilon$ by saying that a block $B_\Delta(k)$ is \emph{open} if $\Upsilon(k)=1$, 
and \emph{closed} otherwise. See Figure~\ref{fig:blockperc}.

We aim to bound the number of open blocks that intersect paths of a certain fixed length in $\R^d$.
For $\ell \in (0,\infty)$, let, analogously to \eqref{defcurlyW1},
\begin{align}\label{defcurlyW}
\mathcal{W}_\ell := \{\text{all paths in } & \R^d \text{ starting at } 0 \text{ which are continuous, }\nonumber \\ 
& \text{ piecewise }C^1, \text{ and have length at most } \ell\}.
\end{align}

\begin{figure}[htb]
\vspace{0.2cm}
\begin{picture}(0.5,0.5)
\put(165,0){\includegraphics[height= 120pt,width=120pt]{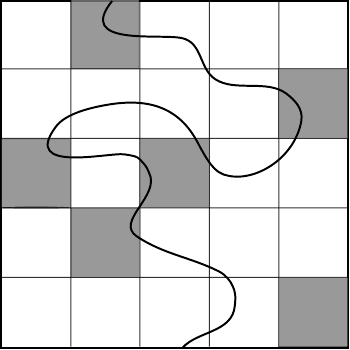}}
\end{picture}
\put(295,100){$\R^2$}
\put(143,9){$\Delta$}
\put(154,8){\Large\mbox{\{}}
\caption{\small Block percolation in $\R^2$. Gray blocks are open. The curve represents a path in $\mathcal{W}_{\ell}$.} 
\label{fig:blockperc}
\end{figure}

For $w \in \mathcal{W}_\ell$, put
\begin{equation}\label{defpsi}
\psi(w):= \#\{k \in \Delta \Z^d \colon\, w \text{ intersects } B_{\Delta}(k) \text{ and } \Upsilon(k) = 1 \}
\end{equation}
and let
\begin{equation}\label{defPsi}
\Psi(\ell) := \sup_{w \in \mathcal{W}_\ell} \psi(w).
\end{equation}
In order to control $\Psi(\ell)$, we need to restrict the class of allowed percolative systems.
We will call a PS $\Upsilon$ \emph{homogeneous} with parameter $p \in (0,1)$ if $\Upsilon(k)$ has distribution Bernoulli($p$) for each $k \in \Delta \Z^d$. We call it (finitely) \emph{partitioned} if there exists a finite partition $\mathcal{P}$ of $\Delta \Z^d$ such that,
for each $I \in \mathcal{P}$,
\begin{equation}\label{indepb}
\left(\Upsilon(k)\right)_{k \in I} \text{ are jointly independent.}
\end{equation}
In other words, $\Upsilon$ is partitioned if its dependence graph has a finite chromatic number.
In that case, we let $|\mathcal{P}| := \# \mathcal{P}$. 
In the following, we use the abbreviation $p$-PPS for ``homogeneous partitioned percolative system with parameter $p$''.

\subsection{Key lemma}
\label{subsec:keylemma}

The following lemma is the key to the proof of Proposition~\ref{prop:PPSseq} below.

\begin{lemma}\label{lemma:tailPsi}
There exist constants $c_1, c_2 \in (0,\infty)$ depending on $d$  only such that,
for any percolative system $\Upsilon$ with scale $\Delta$ that is stochastically dominated by a 
$p$-PPS with partition $\mathcal{P}$,
\begin{equation}\label{eqtailPsi}
P\left( \Psi(\ell) > |\mathcal{P}| c_1 \frac{\theta \ell}{\Delta} \right) \le |\mathcal{P}| e^{- c_2 \left(\frac{\theta \ell}{\Delta}-1\right)}
\;\;\; \text{for any } \theta \in [p^{\frac{1}{d}},1].
\end{equation}
\end{lemma}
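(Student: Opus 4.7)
The plan is to combine a pigeonhole reduction to a single partition class with a first-moment count over ordered tuples of open blocks visited along the path, rather than over block-walks. By the assumed stochastic domination, I may work directly with a $p$-PPS $\Upsilon$ with partition $\mathcal{P}$. Setting $K := \theta\ell/\Delta$, the event $\{\Psi(\ell) > |\mathcal{P}|c_1 K\}$ requires the existence of $w \in \mathcal{W}_\ell$ with $\psi(w) > |\mathcal{P}|c_1 K$; by pigeonhole, some class $I \in \mathcal{P}$ then accounts for more than $c_1 K$ of the open blocks intersected by $w$. Letting $\Psi_I(\ell)$ denote the class-$I$ analogue of $\Psi(\ell)$, the problem reduces to showing $P(\Psi_I(\ell) > c_1 K) \leq e^{-c_2(K-1)}$ for each $I$; the factor $|\mathcal{P}|$ in the final bound then appears as the union-bound penalty over classes.

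Within a single class $I$, the variables $\{\Upsilon(k):k\in I\}$ are i.i.d.\ Bernoulli($p$), which is the key structural feature I would exploit next. Setting $T := \lceil c_1 K \rceil + 1$, the event $\{\Psi_I(\ell) > c_1 K\}$ requires the existence of distinct class-$I$ blocks $k_1,\ldots,k_T$, all open, visitable in this order by some path of length $\leq \ell$. With $k_0 := 0$ and $d_i := \|k_{i+1}-k_i\|_\infty/\Delta$, this forces $\sum_{i=0}^{T-1} d_i \leq \ell/\Delta =: M$ (up to a harmless additive correction of order $T$). A direct union bound over admissible sequences then gives $P(\Psi_I(\ell) > c_1 K) \leq p^T N(T)$, where $N(T)$ is the number of such sequences.

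The key estimate is on $N(T)$. Given $k_i$, the number of lattice blocks at block-distance $d_{i+1}$ is at most $C\, d_{i+1}^{d-1}$ with $C = C(d)$, so $N(T) \leq C^T \sum_{d_0+\cdots+d_{T-1}\leq M} \prod_i d_i^{d-1}$. Applying AM-GM to $\prod_i d_i^{d-1} \leq (M/T)^{(d-1)T}$ and the standard composition count $\binom{M}{T} \leq (eM/T)^T$, I obtain $N(T) \leq (Ce)^T (M/T)^{dT}$. Assuming $K \geq 1$ (otherwise the lemma is trivial since its right-hand side is at least $1$), one has $M/T \leq 1/(c_1\theta)$, so $p^T N(T) \leq (Ce\, p/(c_1\theta)^d)^T$. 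Now the hypothesis $\theta \geq p^{1/d}$ enters precisely to make $p/\theta^d \leq 1$, giving $p^T N(T) \leq (Ce/c_1^d)^T$. Choosing $c_1 = c_1(d)$ large enough that $Ce/c_1^d \leq e^{-1}$ yields $p^T N(T) \leq e^{-c_1 K}$, and summing over $|\mathcal{P}|$ classes with $c_2 := c_1$ (absorbing the harmless shift $e^{-c_1 K} \leq e^{-c_1(K-1)}$) completes the argument.

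I expect the counting step to be the main obstacle. A naive enumeration over block-walks of length $\sim \ell/\Delta$ produces $c^{\ell/\Delta}$ terms, which overwhelms the Bernoulli factor $p^T$ whenever $\theta$ is small. The point of enumerating instead over just the $T$ open blocks actually visited is that $T \approx c_1 K$ can be much smaller than $\ell/\Delta$, and the path-length constraint $\sum d_i \leq M$ together with AM-GM gives a bound of only $(M/T)^{dT}$ on the geometric factor; the hypothesis $\theta \geq p^{1/d}$ is then exactly what is needed to absorb this into $p^T$ with constants depending only on $d$.
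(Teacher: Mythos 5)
Your route is genuinely different from the paper's. The paper coarse-grains into $L\Delta$-blocks with $L=\lceil\theta^{-1}\rceil$, enumerates lattice animals of such blocks containing the origin (there are at most $e^{K_2 N}$ of them, with $N\approx K_1\theta\ell/\Delta$), and applies Bernstein's inequality to the Binomial count of open $\Delta$-blocks inside each animal. You instead take a first-moment union bound over ordered $T$-tuples of open class-$I$ blocks, bound the number of admissible tuples via the path-length constraint and AM-GM, and pay $p^T$ per tuple. The pigeonhole reduction to one partition class and the use of $\theta\ge p^{1/d}$ to absorb the geometric entropy are both sound ideas, and the overall strategy is valid. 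Your route trades the lattice-animal/Bernstein machinery for a direct tuple count; the paper's route is arguably more robust because the coarse-graining scale $L$ automatically balances the entropy against $p$ without ever needing to enumerate the open blocks directly.

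However, there is a genuine gap at the step you flag as ``a harmless additive correction of order $T$.'' The path-length constraint yields $\sum_i d_i\le M+T$ with $M=\ell/\Delta$, not $\sum_i d_i\le M$: between consecutive distinct blocks $k_i,k_{i+1}$ the path need only traverse distance $(d_i-1)\Delta$, so one loses $\Delta$ per step. You then substitute $M/T\le 1/(c_1\theta)$ into $(M/T)^{dT}$, silently dropping the correction. But $T\approx c_1 K=c_1\theta M$, so when $\theta$ is of order $1$ one has $T\asymp M$ and the correction is a constant \emph{multiple} of $M$, not a lower-order term. Carrying it through gives $(M+T)/T=M/T+1\le 1/(c_1\theta)+1$, and then $p^{1/d}\bigl((M+T)/T\bigr)\le 1/c_1+p^{1/d}$. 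Since $p^{1/d}$ can be arbitrarily close to $1$ when $\theta$ is, the resulting base $Ce\bigl(1/c_1+p^{1/d}\bigr)^d$ cannot be pushed below $1$ by increasing $c_1$, and the bound $p^T N(T)\le (Ce/c_1^d)^T$ does not follow. The concrete failure mode: take $\theta=p=1$; then $p^T N(T)=N(T)\ge 2^T=2^{\Theta(c_1 M)}$, which is far from the claimed $e^{-c_1 K}$, even though the true probability is $0$.

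The fix is short but necessary. Any $w\in\mathcal{W}_\ell$ intersects at most $K_1\lceil\ell/\Delta\rceil$ $\Delta$-blocks (the paper's constant $K_1$), so if $c_1\theta\ge 2K_1$ the event $\{\Psi(\ell)>|\mathcal{P}|c_1\theta\ell/\Delta\}$ is empty once $\ell/\Delta\ge 1$, and the inequality holds trivially. Equivalently, you may assume $T\le K_1\lceil M\rceil$ (else $N(T)=0$), which forces $M/T\ge c/K_1$ and hence $(M+T)/T\le (1+K_1/c)\,M/T$ for a dimensional constant; then the $+1$ really is absorbed into a multiplicative constant and your chain of inequalities goes through with $c_1$ chosen large depending only on $d$. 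While you are patching this, note also that the composition count for $\sum_{i=0}^{T-1}d_i\le M'$ with $d_i\ge0$ is $\binom{M'+T}{T}$, not $\binom{M'}{T}$, and that $d_0$ may vanish; both are harmless once $T\lesssim M$ is in place, but should be stated.
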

\noindent
Our proof of Lemma~\ref{lemma:tailPsi} is an adaptation of the proof of Lemma 8 in \cite{KeSi03}. 
It is based on geometric constraints of $\R^d$ and an application of Bernstein's inequality, 
which we recall for the case of i.i.d.\ bounded random variables.
\begin{lemma}\label{Bernstein} (Bernstein's inequality)
Suppose that $(X_i)_{i \in \N}$ is an i.i.d.\ sequence of a.s.\ bounded random variables with joint law $P$. Then
\begin{equation}\label{eqBernstein}
P\left(\sum_{i=1}^n X_i -EX_i > x\right) \le e^{-\frac{x}{2} \left(\|X_1\|_{\infty}+ \frac{n Var(X_1)}{x} \right)^{-1}}.
\end{equation}
\end{lemma}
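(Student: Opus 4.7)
The plan is to prove this via the classical Chernoff--Cram\'er exponential tilting argument. Write $Y_i := X_i - EX_i$, $M := \|X_1\|_\infty$, and $\sigma^2 := \mathrm{Var}(X_1)$, noting that the $Y_i$ are i.i.d., centered, and bounded (by $2M$, which we may absorb into constants; by translation we may even take $|Y_i| \le M$). For any $\lambda > 0$, Markov's inequality applied to the exponential moment combined with independence gives
$$P\!\left(\sum_{i=1}^n Y_i > x\right) \le e^{-\lambda x}\,\bigl(E e^{\lambda Y_1}\bigr)^n.$$
The task then reduces to controlling the single-variable moment generating function and optimizing over $\lambda$.

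The core estimate is a quadratic-in-$\lambda$ bound on $E e^{\lambda Y_1}$. Using the Taylor expansion, $EY_1 = 0$, and the pointwise inequality $|Y_1|^k \le M^{k-2} Y_1^2$ valid for $k \ge 2$, I would write
$$E e^{\lambda Y_1} \;\le\; 1 + \sigma^2 \sum_{k \ge 2} \frac{\lambda^k M^{k-2}}{k!}.$$
Summing the geometric-type tail (using, e.g., $k! \ge 2\cdot 3^{k-2}$) produces an estimate of the form $E e^{\lambda Y_1} \le \exp\bigl(\tfrac{\lambda^2 \sigma^2/2}{1 - c\lambda M}\bigr)$ valid on the range $\lambda M < 1/c$, where $c$ is an absolute constant. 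Taking the $n$th power and combining with the Chernoff bound yields
$$P\!\left(\sum_i Y_i > x\right) \;\le\; \exp\!\left(-\lambda x \,+\, \frac{n\lambda^2 \sigma^2/2}{1 - c\lambda M}\right).$$

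Finally I would optimize: the near-optimal choice $\lambda = x/(n\sigma^2 + c' Mx)$ (for a constant $c'$ related to $c$) makes the right-hand side equal to $\exp\bigl(-\tfrac{x^2/2}{n\sigma^2 + c'Mx}\bigr)$, which is exactly the form $\exp\bigl(-\tfrac{x}{2}(\|X_1\|_\infty + n\mathrm{Var}(X_1)/x)^{-1}\bigr)$ claimed in the lemma, after identifying $c'=1$. The main (and essentially only) obstacle is bookkeeping the constants so that the coefficient in front of $M$ in the denominator comes out to $1$ rather than a number like $1/3$; this is handled by being slightly wasteful in the MGF bound of Step~2 (e.g.\ bounding $1/(1 - c\lambda M)$ more crudely), which is allowed since the stated inequality is a slightly weaker, cleaner form of the sharp Bernstein bound. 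Everything else is textbook, and no structure beyond i.i.d.\ boundedness is used.
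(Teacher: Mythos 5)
Your argument is correct in substance and follows the standard Chernoff--Cram\'er route; note that the paper does not prove this lemma at all, it only cites Chow and Teicher (Exercise 4.3.14), so a written-out argument like yours is strictly more than what the paper records, and it is the expected textbook proof. One point needs fixing, though it is harmless. The parenthetical claim that ``by translation we may even take $|Y_i|\le M$'' is false: translating $X_i$ by a constant does not change $Y_i=X_i-EX_i$ at all, and centering can genuinely inflate the almost-sure bound up to $2\|X_1\|_\infty$ (take $X_1\in\{-1,+1\}$ with $P(X_1=1)=0.9$: then $\|X_1\|_\infty=1$ while $|X_1-EX_1|$ takes the value $1.8$). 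Accordingly your pointwise inequality should read $|Y_1|^k\le (2M)^{k-2}Y_1^2$ for $k\ge 2$. This does not damage the proof: carrying $b=2M$ through the standard computation, i.e.\ $Ee^{\lambda Y_1}\le \exp\bigl(\tfrac{\lambda^2\sigma^2/2}{1-\lambda b/3}\bigr)$ via $k!\ge 2\cdot 3^{k-2}$, and then choosing $\lambda=x/(n\sigma^2+bx/3)$, gives $P\bigl(\sum_{i=1}^n Y_i>x\bigr)\le \exp\bigl(-\tfrac{x^2/2}{n\sigma^2+bx/3}\bigr)$; since $b/3=2M/3\le M=\|X_1\|_\infty$, this is at most the stated bound $\exp\bigl(-\tfrac{x}{2}\bigl(\|X_1\|_\infty+\tfrac{n\,\mathrm{Var}(X_1)}{x}\bigr)^{-1}\bigr)$. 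So the slack you invoke is indeed there, but it comes from the factor $1/3$ in the sharp Bernstein denominator absorbing the factor $2$ from centering, not from any translation trick; with that bookkeeping made explicit the proof is complete.
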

For a proof of Lemma~\ref{Bernstein}, see e.g.\ Chow and Teicher \cite{ChTe88}, Exercise 4.3.14.
\begin{proof}[Proof of Lemma~\ref{lemma:tailPsi}]
There exist $K_1, K_2 \in \N$, depending on $d$ only, with the following properties. 
For any $\ell$ and $\Delta$, the total number of $\Delta$-blocks intersecting any path in $\mathcal{W}_\ell$
 is at most $K_1 \lceil \ell/\Delta \rceil$ and, 
 for any $n$ and $\Delta$, the number of connected subsets of $\R^d$ that are unions of 
exactly $n$ $\Delta$-blocks and contain the origin
is at most $e^{K_2n}$.
We will show that \eqref{eqtailPsi} holds with $c_2 := 2^d K_1 K_2$ and $c_1 := 16 c_2$.

Since $\Psi(\ell)$ does not decrease if additional $\Delta$-blocks are opened, 
we may suppose that $\Upsilon$ is a $p$-PPS with partition $\mathcal{P}$.
Let
\begin{equation}
L := \lceil \theta^{-1} \rceil, \;\; N := K_1 \lceil \ell/(L \Delta) \rceil.
\end{equation} 
As discussed in the first paragraph, $N$ is an upper bound 
for the number of $L\Delta$-blocks intersected by any path in $\mathcal{W}_\ell$. 
If $\ell/(L\Delta) < \frac12$, then $\theta \ell/\Delta < 1$
and \eqref{eqtailPsi} holds trivially. 
Therefore, we may assume that $\ell/(L\Delta) \ge \frac12$, 
in which case $N \le 3K_1 \ell/(L \Delta)$. Letting
\begin{equation}\label{ptailPsieq1}
\begin{array}{rl}
\mathscr{C}_L^N := \{ \hspace{-0.3cm} & \text{connected subsets of } \R^d \text{ containing the origin }\\
& \;\; \text{ that are the union of } N \text{ distinct } L\Delta \text{-blocks}\},
\end{array}
\end{equation}
we can estimate, for $x > 0$,
\begin{align}\label{ptailPsieq2}
P\left(\exists \; w \in \mathcal{W}_\ell \colon\, \psi(w) > x\right) 
& \le \sum_{C \in \mathscr{C}_L^N} P\left(\exists \; w \in \mathcal{W}_\ell, w \subset C \colon\, \psi(w) > x\right) \nonumber \\
& \le \sum_{C \in \mathscr{C}_L^N} P\left(\#\{\text{open } \Delta \text{-blocks in } C \} > x\right).
\end{align}
To estimate for a fixed $C \in \mathscr{C}_L^N$ the corresponding term in \eqref{ptailPsieq2}, we use the partition. 
\begin{align}\label{ptailPsieq3}
P\left(\#\{\text{open } \Delta \text{-blocks in } C \} > x\right) 
& \le \sum_{I \in \mathcal{P}}P\left(\#\{\text{open } \Delta \text{-blocks in } C\cap I\} > \frac{x}{|\mathcal{P}|}\right) \nonumber \\
& \le |\mathcal{P}|P\left(\textrm{Bin}(NL^d,p) > \frac{x}{|\mathcal{P}|} \right),
\end{align}
where $\textrm{Bin}(NL^d, p)$ is a Binomial random variable and \eqref{ptailPsieq3} 
is justified by \eqref{indepb} and the fact that each 
$C \in \mathscr{C}_L^N$ is the union of exactly $NL^d$ $\Delta$-blocks. 
By the definition of $L$ and our choice of $c_1$, we can check that $pNL^d < \frac12 c_1 \theta \ell/\Delta$. 
Therefore, substituting $x$ in \eqref{ptailPsieq3} by $|\mathcal{P}|c_1 \theta \ell / \Delta$ 
and applying Bernstein's inequality \eqref{eqBernstein}, we obtain
\begin{equation}\label{ptailPsieq4}
P\left(\#\{\text{open } \Delta \text{-blocks in } C \} > |\mathcal{P}| c_1 \frac{\theta \ell}{\Delta} \right) \le |\mathcal{P}| \exp\left(-\frac{c_1 \theta \ell}{8\Delta}\right).
\end{equation}
Since $N \le 3 K_1 \ell /(L\Delta) \le 3 K_1  \ell \theta /\Delta $,
we have $K_2 N < c_2 \theta \ell /\Delta$. Hence, combining \eqref{ptailPsieq2} and \eqref{ptailPsieq4}, we get
\begin{equation}\label{ptailPsieq5}
P\left( \Psi(\ell) > |\mathcal{P}| c_1 \frac{\theta \ell}{\Delta} \right) 
\le |\mathcal{P}| e^{ K_2 N - 2 c_2 \frac{\theta \ell}{\Delta}} \le |\mathcal{P}| e^{-c_2  \frac{\theta \ell}{\Delta}}.
\end{equation}
\end{proof}

\subsection{Sequences of percolative systems}
\label{subsec:seqPS}

The following proposition concerns sequences of percolative systems, and will be used in Section~\ref{sec:proofmainprop}
in the proof of Proposition~\ref{mainprop}.

\begin{proposition}\label{prop:PPSseq}
Let $(\Upsilon_r)_{r \in \N}$ be a sequence of percolative systems in $\R^d$ with with scales $\Delta_r$,
defined jointly in the same probability space through an arbitrary coupling.
Suppose that, for each $r \in \N$, $\Upsilon_r$ is stochastically dominated by a $p_r$-PPS
with partition $\mathcal{P}_r$ such that the following hold: 
\begin{equation}\label{PPSseqeq1}
\begin{array}{cl}
\text{(i) } & \limsup_{r \to \infty} |\mathcal{P}_r| < \infty.\\
\text{(ii) } & m := \limsup_{r \to \infty} r^{-1} \log(\Delta_r)  < \infty.\\
\text{(iii) } & M:= - \limsup_{r \to \infty} r^{-1} \log(p_r)  > md.
\end{array}
\end{equation}
Then, for any $\kappa \in (0,(md)^{-1})$,
\begin{equation}\label{PPSseqeq2}
\lim_{n \to \infty} \limsup_{\ell \to \infty} \frac{1}{\ell} 
\sum_{r = n}^{\lfloor \kappa \log(\ell) \rfloor} \Delta^d_r \Psi_r(\ell) = 0 \quad \text{a.s.}
\end{equation}
where $\Psi_r(\ell)$ is defined for $\Upsilon_r$ as in \eqref{defpsi}--\eqref{defPsi}.
\end{proposition}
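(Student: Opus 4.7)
My plan is to apply Lemma~\ref{lemma:tailPsi} scale by scale, combine via Borel--Cantelli along a geometric subsequence $\ell_k := 2^k$, and then bound the resulting sums using the growth/decay rates (ii)--(iii).

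First I would reduce to $\ell_k = 2^k$: since both $\ell \mapsto \Psi_r(\ell)$ and $\ell \mapsto \lfloor \kappa\log\ell \rfloor$ are nondecreasing, the quantity in \eqref{PPSseqeq2} at any $\ell \in [\ell_k, \ell_{k+1}]$ is bounded, up to a factor of $2$, by the corresponding quantity at $\ell_{k+1}$; hence it suffices to prove the limsup along $\ell_k$. For each $r \ge 1$ and each large $k$, I would then apply Lemma~\ref{lemma:tailPsi} with the balanced choice
\[
\theta_{r,k} := \max\{p_r^{1/d},\,Ck\Delta_r/\ell_k\} \in [p_r^{1/d},1],
\]
where $C>0$ is a large constant (the upper bound of $1$ is valid for $k$ large enough by (ii) and $m\kappa<1$). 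Since then $\theta_{r,k}\ell_k/\Delta_r \ge Ck$, the Lemma yields $P\bigl(\Psi_r(\ell_k) > |\mathcal{P}_r|c_1\theta_{r,k}\ell_k/\Delta_r\bigr) \le |\mathcal{P}_r|\,e^{-c_2(Ck-1)}$. A union bound over $r \le \lfloor \kappa k \log 2\rfloor$ (using (i) to bound $|\mathcal{P}_r|$) followed by summation in $k$ gives a convergent total once $C$ is chosen sufficiently large. By Borel--Cantelli, almost surely, for all large $k$ and all such $r$,
\[
\frac{\Delta_r^d \Psi_r(\ell_k)}{\ell_k} \le |\mathcal{P}_r| c_1 \theta_{r,k} \Delta_r^{d-1} \le c_*\left(p_r^{1/d}\Delta_r^{d-1} + Ck\,\Delta_r^d/\ell_k\right),
\]
where $c_* := c_1 \sup_r |\mathcal{P}_r| < \infty$.

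Summing the right-hand side over $r$ from $n$ to $\lfloor\kappa k\log 2\rfloor$, the piece $\sum_r Ck\Delta_r^d/\ell_k$ is a geometric-type sum dominated by its top scale, contributing at most $O(k\,\ell_k^{md\kappa - 1})$, which vanishes as $k \to \infty$ thanks to the hypothesis $md\kappa < 1$. The remaining tail $\sum_{r\ge n} p_r^{1/d}\Delta_r^{d-1}$ is, by (ii)--(iii), a geometrically decaying series with general term bounded by $e^{-r(M/d - m(d-1)) + o(r)}$; in the case $d=2$ relevant to the application, the assumption $M > md = 2m$ gives $M/d - m(d-1) = M/2 - m > 0$, so the series converges and its tail vanishes as $n \to \infty$. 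Passing back from $\ell_k$ to general $\ell$ via the reduction in Step~1 completes the proof.

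The main obstacle is the joint calibration of $\theta_{r,k}$: it must be large enough that Lemma~\ref{lemma:tailPsi}'s tail is summable in both $r$ and $k$ (so Borel--Cantelli applies), yet small enough that the deterministic bound $|\mathcal{P}_r| c_1 \theta_{r,k}\Delta_r^{d-1}$ itself is summable over scales. The cut-off $\kappa < 1/(md)$ is precisely what makes the deterministic piece $Ck\Delta_r/\ell_k$ negligible after summation, leaving the genuinely stochastic piece $p_r^{1/d}$ to be controlled by condition (iii).
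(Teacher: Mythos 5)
Your proof is correct and rests on the same skeleton as the paper's: apply Lemma~\ref{lemma:tailPsi} at every scale $r\le\lfloor\kappa\log\ell\rfloor$ with a suitably calibrated $\theta$, take a union bound, invoke Borel--Cantelli, and then sum the resulting deterministic bounds over scales. The one genuine difference is the calibration. The paper takes $\theta_r:=p_r^{1/d}\vee e^{-br}$ with $b=m(d-1)+\varepsilon$, independent of $\ell$, so that the post-Borel--Cantelli bound is a single $\ell$-free series $\sum_r\Delta_r^{d-1}\theta_r$ whose tail vanishes; the price is that the exceptional probability must be checked to be summable in $\ell$ via the exponent $a=1-\kappa(m+\varepsilon+b)>0$. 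You instead take $\theta_{r,k}:=p_r^{1/d}\vee Ck\Delta_r/\ell_k$ along the dyadic sequence $\ell_k=2^k$, which makes the tail probability uniformly $e^{-c_2(Ck-1)}$ across all scales (so summability in $k$ is immediate), at the price of an extra $\ell$-dependent term $Ck\Delta_r^d/\ell_k$ in the deterministic bound, which you correctly kill using $\kappa md<1$. Two further remarks. First, your explicit reduction to $\ell_k=2^k$ via monotonicity of $\Psi_r(\ell)$ is a point of care the paper glosses over (it applies Borel--Cantelli to a continuum of events ``summable in $\ell$''). Second, your phrase ``geometric-type sum dominated by its top scale'' is not literally justified by (ii), which bounds $\Delta_r$ only from above; but bounding the sum by (number of terms)$\times$(largest term) costs only a factor $O(\log\ell_k)$ and the conclusion stands. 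Finally, your observation that convergence of $\sum_r p_r^{1/d}\Delta_r^{d-1}$ under (iii) really requires $M/d>m(d-1)$ --- automatic only when $d=2$ --- is accurate; the paper's own proof has exactly the same restriction (indeed its choice $b=m(d-1)+\varepsilon$ even makes $\Delta_r^{d-1}e^{-br}$ merely bounded, not summable, when $d=2$, a slip your calibration avoids), so you are not losing anything relative to the source.
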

\begin{proof}
Let $ 0 < \varepsilon < \frac12(1/\kappa -md)$ and 
put $\theta_r := \sqrt[d]{p_r} \vee e^{-b r}$ with $b = m(d-1) + \varepsilon$.
By (i), there exists $K_1 \in (0,\infty)$ such that $\sup_r |\mathcal{P}_r| \le K_1$
and, by (ii), there exist $K_2 \in (0,\infty)$ and $r_0 \in \N$ such that $\Delta_r \le K_2 e^{(m+\varepsilon) r}$ whenever $r \ge r_0$. 
Hence, by Lemma~\ref{lemma:tailPsi},
\begin{align}\label{pPPSseqeq1}
P\left(\exists \; r_0 \le r \le \lfloor \kappa \log(\ell) \rfloor \colon\, \Psi_r(\ell) > K_1 c_1 \ell \frac{\theta_r}{\Delta_r}\right)
& \le K_1 \kappa \log(\ell) e^{c_2} \exp\left(-c_2\ell\frac{\ell^{-\kappa b}}{K_2 \ell^{\kappa (m+ \varepsilon)}}\right) \nonumber \\
& = K_1 \kappa \log(\ell) e^{c_2} \exp\left(-\frac{c_2\ell^a}{K_2}\right),
\end{align}
where $a := 1 - \kappa(m+\varepsilon+b) >0$ by our choice of $\varepsilon$ and $b$.
Thus, \eqref{pPPSseqeq1} is summable in $\ell$. By the Borel-Cantelli lemma, 
a.s.\ for $n \ge r_0$ and $\ell$ large enough we may estimate
\begin{equation}\label{pPPSseqeq2}
\frac{1}{\ell}\sum_{r = n}^{\lfloor \kappa \log(\ell) \rfloor} \Delta^d_r \Psi_r(\ell) 
\le K_1 c_1 \sum_{r = n}^{\lfloor \kappa \log(\ell) \rfloor}\Delta^{d-1}_r\theta_r.
\end{equation}
By (ii)-(iii) and the definition of $\theta_r$, $\Delta^{d-1}_r\theta_r$ is summable in $r$. 
Therefore \eqref{PPSseqeq2} follows by first letting $\ell \to \infty$ and then $n \to \infty$.
\end{proof}

\section{Proof of Proposition~\ref{mainprop}}
\label{sec:proofmainprop}

Section~\ref{subsec:proofmainpropa} contains the proof of Proposition~\ref{mainprop}(a),
Section~\ref{subsec:proofmainpropb} the proof of Proposition~\ref{mainprop}(b).

Most of the work is concentrated in Section~\ref{subsec:proofmainpropa},
where the renormalization scheme for rarefied blocks is defined
and analyzed using the results from Section~\ref{sec:PPS}.
Central to this work are estimates for systems of independent simple random walks,
stated in Lemma~\ref{lemma:ISRW} below, which are used for comparison
with the system of \emph{holes} of the SSEP via a result due to Liggett.
These estimates are used to control a \emph{recursive formula} that, roughly speaking,
transfers properties from larger to smaller scales, allowing us to deduce
microscopic properties from mesoscopic and macroscopic properties.

In Section~\ref{subsec:proofmainpropb}, a similar approach is used
to analyze turbulent blocks from the point of view of Section~\ref{sec:PPS}.
There the construction and estimates are much simpler.

\subsection{Proof of Proposition~\ref{mainprop}(a)}
\label{subsec:proofmainpropa}

\subsubsection{Bad blocks}
\label{subsubsec:badblocks}

Fix $\rho_- \in (0,\rho)$, let $N_0 \in \N$ be large enough such that
\begin{equation}\label{defrhoinfty}
\bar{\rho}_{\infty} := \prod_{r=1}^{\infty}(1-N_0^{-r/4}) \ge 1-\rho_- =: \bar{\rho}_+
\end{equation}
and put
\begin{equation}\label{defbarrhor}
\bar{\rho}_r:=\prod_{k=1}^r(1-N_0^{-k/4}).
\end{equation}
Set
\begin{equation}\label{defomegaDeltarhor}
\omega_r := N_0^r, \;\; \Delta_r := N_0^{6r} \; \text{ and } \; \rho_r:=1-\bar{\rho}_r.
\end{equation}
The parameters $\epsilon_r$ will be defined in Section~\ref{subsec:proofmainpropb}.
Set also $\bar{\rho} := 1-\rho$ and, for $\eta \in \{0,1\}^{\Z}$, define $\bar{\eta}$ by
\begin{equation}\label{defbareta}
\bar{\eta}(x) := 1 - \eta(x).
\end{equation}

In the following, we will also need $r$-superblocks, defined as
\begin{equation}\label{defrsupblocks} 
\mathbf{B}_r(k,s) := [k-5\Delta_r,k+6\Delta_r)\times[s-2\Delta_r,s+\Delta_r), \quad k,s \in \Delta_r\Z.
\end{equation}

We call the $r$-block $B_r(k,s)$ \emph{bad} if $\mathbf{B}_r(k,s)$ is $r$-rarefied.
Thus, any $r$-rarefied $r$-block is bad. 
We call $r$-\emph{dense} any set in $\R^2$ that is not $r$-rarefied.

\begin{lemma}\label{lemma:nobigraref}
For any $\kappa >0$, $\mathbb{P}_{\nu_\rho}$-a.s.\ there exists 
a (random) $\ell_0 \in (0,\infty)$ such that,
if $\ell\ge\ell_0$, no bad $r$-blocks with 
$r \ge \lfloor \kappa \log(\ell) \rfloor $ intersect $[-\ell,\ell]^2$.
\end{lemma}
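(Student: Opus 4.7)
The plan is to combine a Chernoff bound, two union bounds, and the Borel--Cantelli lemma. The key structural observation is that the sequence $(\bar\rho_r)_{r \in \N}$ was engineered in \eqref{defrhoinfty}--\eqref{defbarrhor} precisely so that $\rho_r = 1-\bar\rho_r \le 1-\bar\rho_\infty \le \rho_-$ for every $r$, with $\rho_- < \rho$ strictly; this provides a \emph{uniform} gap between the typical density $\rho$ and the rarefaction threshold $\rho_r$.

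First I exploit the stationarity of the SSEP: since $\nu_\rho$ is invariant for $\xi$, the marginal of $\xi_t$ under $\mathbb{P}_{\nu_\rho}$ is $\nu_\rho$ for every $t \in \R$. Hence for any $(x,t) \in \Z^2$, $\Sigma_r^x(\xi_t)$ is a $\mathrm{Bin}(\omega_r, \rho)$ random variable, and a standard Chernoff bound yields a constant $c > 0$ (depending only on $\rho$ and $\rho_-$) with
\begin{equation*}
\mathbb{P}_{\nu_\rho}\bigl(\Sigma_r^x(\xi_t) < \rho_r\omega_r\bigr) \;\le\; e^{-c\omega_r} \;=\; e^{-c N_0^r}
\end{equation*}
uniformly in $r$, $x$, $t$. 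Since $\mathbf{B}_r(k,s)$ contains at most $33\Delta_r^2$ integer space--time points, a union bound gives
\begin{equation*}
\mathbb{P}_{\nu_\rho}(B_r(k,s)\text{ is bad}) \;\le\; 33\,\Delta_r^2\, e^{-c N_0^r} \;=\; 33\, N_0^{12r}\, e^{-c N_0^r},
\end{equation*}
and a second union bound (at most $C(1+L/\Delta_r)^2$ many $r$-blocks intersect $[-L,L]^2$) produces
\begin{equation*}
\mathbb{P}_{\nu_\rho}\bigl(\exists\text{ bad $r$-block intersecting }[-L,L]^2\bigr) \;\le\; C(L+\Delta_r)^2\, e^{-c N_0^r}.
\end{equation*}

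To finish, set $L_r := e^{(r+1)/\kappa}$, chosen so that any $\ell$ with $\lfloor\kappa\log\ell\rfloor \le r$ satisfies $\ell \le L_r$. Plugging $L=L_r$ into the previous display, the right-hand side is summable in $r$ because $(L_r+\Delta_r)^2$ grows only singly-exponentially in $r$ while $e^{-c N_0^r}$ decays doubly-exponentially. Borel--Cantelli then yields, $\mathbb{P}_{\nu_\rho}$-a.s., a random $r^\ast$ such that no bad $r$-block intersects $[-L_r, L_r]^2$ for any $r \ge r^\ast$. Taking $\ell_0 := e^{r^\ast/\kappa}$, every $\ell \ge \ell_0$ satisfies $r_\ell := \lfloor\kappa\log\ell\rfloor \ge r^\ast$, and for any such $r \ge r_\ell$ we have $\ell \le L_r$, so no bad $r$-block with $r \ge r_\ell$ intersects $[-\ell,\ell]^2$, as required.

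The only real content is the Chernoff estimate, which hinges crucially on $\rho_r$ being strictly below $\rho$ uniformly in $r$; the mild subtlety is that Borel--Cantelli is naturally indexed by $r$, so one introduces the comparison scales $L_r$ first and recovers the statement in $\ell$ afterwards.
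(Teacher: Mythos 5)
Your proof is correct and follows essentially the same route as the paper's: stationarity of $\nu_\rho$ reduces $\Sigma_r^x(\xi_t)$ to a $\mathrm{Bin}(\omega_r,\rho)$ variable, a Chernoff/Bernstein bound exploits the uniform gap $\rho_r \le \rho_- < \rho$, two union bounds (over the $33\Delta_r^2$ integer points of the superblock, then over $r$-blocks meeting the target square) give a doubly-exponentially decaying bound, and Borel--Cantelli finishes. The only cosmetic difference is that you index Borel--Cantelli by the scale $r$ via the comparison windows $L_r=e^{(r+1)/\kappa}$ and then pass to real $\ell$, whereas the paper sums directly over $\ell$; both are fine, and your version is marginally more careful about the integer-to-real transition.
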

\begin{proof}
Since the product Bernoulli measure $\nu_\rho$ is a translation-invariant equilibrium, 
for any $r \in \N$, $x \in \Z$ and $t \in \R$, we have
\begin{align}\label{nobigrarefeq1}
\mathbb{P}_{\nu_\rho}\left(\Sigma_r^x(\xi_t)<\rho_r \omega_r\right) 
& \le \mathbb{P}_{\nu_\rho}\left(\Sigma_r^x(\xi_t)<\rho_- \omega_r \right) \nonumber \\
& = P\left(\textrm{Bin}(\omega_r,\rho) - \rho \omega_r < -(\rho-\rho_-)\omega_r\right) \le e^{-\varepsilon \omega_r},
\end{align}
where $\textrm{Bin}(\omega_r,\rho)$ is a Binomial random variable and $\varepsilon > 0$. 
The last step can be justified e.g.\ by using Bernstein's inequality \eqref{Bernstein}.
Therefore, for any $(k,s) \in \Delta_r \Z^2$,
\begin{align}\label{nobigrarefeq2}
\mathbb{P}_{\nu_\rho}\left(B_r(k,s) \text{ is bad} \right) 
& \le \sum_{(x,t)\in \mathbf{B}_r(k,s)\cap \Z^2}\mathbb{P}_{\nu_\rho} \left( \Sigma_r^x(\xi_t) < \rho_r \omega_r\right) \nonumber \\
& \le 33 \Delta_r^2 e^{-\varepsilon \omega_r} \le C e^{-\frac{\varepsilon}{2}N_0^r}
\end{align}
for some $C \in (0,\infty)$.
Since at most $(2\ell + 1)^2$ $r$-blocks intersect $[-\ell,\ell]^2$, we can estimate
\begin{align}\label{nobigrarefeq3}
& \mathbb{P}_{\nu_\rho} \left( \exists \; r > \kappa \log(\ell) \text{ and a bad } r\text{-block intersecting } [-\ell,\ell]^2 \right) \nonumber \\
& \quad \;\; \le C(2\ell+1)^2 \sum_{r=\lfloor k \log(\ell) \rfloor}^{\infty} e^{-\frac{\varepsilon}{2}N_0^r}
\end{align}
which is summable in $\ell$, and so the claim follows by the Borel-Cantelli lemma.
\end{proof}

\subsubsection{Locally spoiled blocks}
\label{subsubsec:locspoilblocks}

For $(k,s) \in \Delta_r\Z^2$, let
\begin{equation}\label{defneighb}
\mathcal{B}_r(k,s) := [k-\Delta_r, k+2\Delta_r) \times [s-\Delta_r,s+\Delta_r)
\end{equation}
be the \emph{neighbourhood} of the $r$-block $B_r(k,s)$, and let
\begin{equation}\label{defbase}
\mathbf{\Lambda}_r(k,s) := [k-5\Delta_r, k+6\Delta_r) \times \{s-2\Delta_r\}
\end{equation}
be the \emph{base} of the $r$-superblock $\mathbf{B}_r(k,s)$. 
Define also $\mathbf{V}_r^k := [k-5\Delta_r, k+6\Delta_r) \subset \Z$, so that
$\mathbf{\Lambda}_r(k,s) = \mathbf{V}_r^k \times \{s - 2\Delta_r\}$.
See Figure~\ref{fig:relposblocks}.
We also need the \emph{interior} of $\mathbf{B}_r(k,s)$,
\begin{equation}\label{defintsblock}
\mathring{\mathbf{B}}_r(k,s) := [k-5\Delta_r+1,k+6\Delta_r -1) \times [s - 2\Delta_r, s + \Delta_r),
\end{equation}
and, for $(x,t) \in \mathcal{B}_{r+1}(k,s)$,
\begin{align}\label{defsigmahat}
\widehat{\Sigma}_r^{k,s}(x,t) := \#\{ & \text{all particles of the SSEP in } [x,x+ \omega_r)\times \{t\} \text{ that  } \nonumber \\
& \text{ stayed in } \mathring{\mathbf{B}}_{r+1}(k,s) \text{ during the time interval } [s-2\Delta_{r+1}, t]\}.
\end{align}

\begin{figure}[hbtp]
\begin{center}
\setlength{\unitlength}{0.5cm}
\vspace{-0.5cm}
\begin{picture}(20,10)(0,0)
{\linethickness{0.007cm}
\put(0,1){\line(1,0){22}} 
\put(0,3){\line(1,0){22}}
\put(0,5){\line(1,0){22}}
\put(0,7){\line(1,0){22}}
}
{\linethickness{0.007cm}
\put(0,1){\line(0,1){6}}\put(2,1){\line(0,1){6}}
\put(4,1){\line(0,1){6}} \put(6,1){\line(0,1){6}}
\put(8,1){\line(0,1){6}} \put(10,1){\line(0,1){6}}
\put(12,1){\line(0,1){6}} \put(14,1){\line(0,1){6}}
\put(16,1){\line(0,1){6}} \put(18,1){\line(0,1){6}}
\put(20,1){\line(0,1){6}} \put(22,1){\line(0,1){6}}
}
{\linethickness{0.05cm}
\put(0,1){\line(1,0){22}}
\put(0,7){\line(1,0){22}}
\put(0,1){\line(0,1){6}}
\put(22,1){\line(0,1){6}}
\put(8,3){\line(1,0){6}}
\put(8,7){\line(1,0){6}}
\put(8,3){\line(0,1){4}}
\put(14,3){\line(0,1){4}}
\put(10,5){\line(1,0){2}}
\put(10,5){\line(0,1){2}}
\put(12,5){\line(0,1){2}}
}
\put(11,7.5){\vector(0,-1){1.7}}
\put(5,-0.5){\vector(1,0){10}}
\put(-2.7,2){\vector(0,1){3.5}}
\put(23.7,1){\vector(-1,0){1.4}}
\put(-0.9,4.8){$s$}
\put(9.8,0){$k$}
\put(-2,1.7){$\Delta_r$}\put(-0.9,1.6){\huge\mbox{\{}}
\put(9.6,8){$B_r(k,s)$}
\put(8.97,3.7){$\mathcal{B}_r(k,s)$}
\put(16.97,5.5){$\mathbf{B}_r(k,s)$}
\put(22.9,1.3){$\Lambda_r(k,s)$}
\put(-3.4,6){\small\mbox{time}}
\put(15.5,-0.7){\small\mbox{space}}
\end{picture}
\end{center}
\caption{\small Relative position of $B_r(k,s)$, $\mathcal{B}_r(k,s)$, $\mathbf{B}_r(k,s)$ and $\Lambda_r(k,s)$.} \label{fig:relposblocks}
\end{figure}
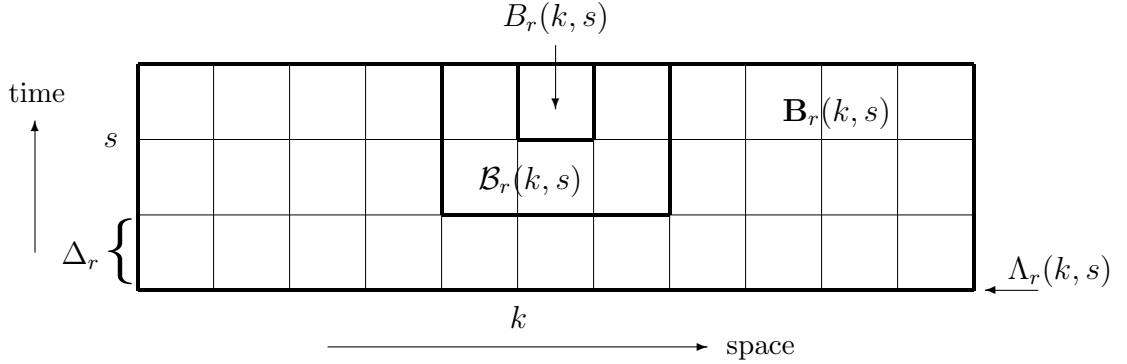

A block $B_{r+1}(k,s)$ is called \emph{locally spoiled} 
if $\mathbf{\Lambda}_{r+1}(k,s)$ is $(r+1)$-dense 
but there is a point $(x,t)$ such that $[x,x+\omega_r)\times \{t\} \subset \mathcal{B}_r(k,s)$ 
and $\widehat{\Sigma}_r^{k,s}(x,t) < \rho_r \omega_r$.
Being locally spoiled means that, in the scale $\Delta_{r+1}$,
the $(r+1)$-block ``has good conditions'', meaning that the base of its $(r+1)$-superblock is $(r+1)$-dense, 
but nonetheless there are not enough particles transfered locally (i.e., inside $\mathbf{B}_{r+1}(k,s)$) to 
ensure that in the finer scale $\Delta_r$ the neighbourhood $\mathcal{B}_{r+1}(k,s)$ is $r$-dense 
(which would in turn guarantee that $B_{r+1}(k,s)$ contains no bad $r$-blocks).
We will see below that, with our choice of parameters, being locally spoiled is an extremely unlikely event.


Define a percolative system $\Upsilon_r$ with scale $\Delta_r$ by
\begin{equation}\label{defUpsilonr}
\Upsilon_r(k,s) := \mathbbm{1}_{\{B_r(k,s) \text{ is locally spoiled}\}},
\end{equation}
and, for each $a = (a_1,a_2) \in \mathbf{B}_r(0,2\Delta_r) \cap \Delta_r \Z^2$, 
let
\begin{equation}\label{defclassa}
I_a := \{(z_1,z_2)\in \Delta\Z^2 \colon\, z_1 \equiv a_1 \hspace{-0.3cm} \pmod{11} \; \text{ and } \; z_2 \equiv a_2 \hspace{-0.3cm} \pmod{3}\}.
\end{equation}
Then
\begin{equation}\label{defcurlyPr}
\mathcal{P}_r :=\{I_a \, \colon\, a \in \mathbf{B}_r(0,2 \Delta_r) \cap \Delta_r \Z^2\}
\end{equation} 
is a partition of $\Delta_r\Z^2$ with $|\mathcal{P}_r| = 33$.

\begin{lemma}\label{lemma:stochdoma}
For all large enough $r \in \N$, $\Upsilon_r$ is stochastically dominated by a $p_r$-PPS with
partition $\mathcal{P}_r$, where $p_r$ tends to $0$ super-exponentially fast as $r \to \infty$.
\end{lemma}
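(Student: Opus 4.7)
The proof splits into two stages: first, I would establish the partitioned structure, and second, I would prove a uniform super-exponential bound on the per-site failure probability.

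For the first stage, a measurability argument based on the graphical construction of Section~\ref{subsec:constructionSSEP} is decisive. Conditional on the restriction $\xi_{s-2\Delta_r}|_{\mathbf{V}_r^k}$ of the SSEP on the base of the superblock, the entire evolution of $\xi$ inside $\mathbf{B}_r(k,s)$ for times in $[s-2\Delta_r,s+\Delta_r]$---including the ``stayed in the interior'' constraints encoded by $\widehat{\Sigma}^{k,s}_\cdot$---is a deterministic functional of the Poisson arrows of $\mathcal{A}$ falling in $\mathbf{B}_r(k,s)$. The modular construction \eqref{defclassa}--\eqref{defcurlyPr} forces the superblocks $\mathbf{B}_r(k,s)$ and $\mathbf{B}_r(k',s')$ associated with distinct $(k,s),(k',s')\in I_a$ to be pairwise disjoint (the periods $11\Delta_r$ and $3\Delta_r$ match the spatial width and temporal length of a superblock). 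Hence, given $(\xi_{s-2\Delta_r}|_{\mathbf{V}_r^k})_{(k,s)\in I_a}$, the family $(\Upsilon_r(k,s))_{(k,s)\in I_a}$ is jointly independent, and a standard coupling reduction shows that it suffices to prove
\[
\mathbb{P}_{\nu_\rho}\bigl(\Upsilon_r(k,s) = 1 \,\big|\, \xi_{s-2\Delta_r}|_{\mathbf{V}_r^k} = \eta\bigr) \le p_r
\]
uniformly over $\eta$, with $p_r$ tending to $0$ super-exponentially in $r$.

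For the second stage, if $\eta$ fails to make the base $r$-dense, the conditional probability vanishes by the definition of locally spoiled. Otherwise $\eta$ carries at least $\rho_r\omega_r$ particles in every $\omega_r$-subwindow of $\mathbf{V}_r^k$, and by a union bound over the polynomially many (in $\Delta_r$) admissible space-time slots $[x,x+\omega_{r-1})\times\{t\}$ inside the superblock it is enough to bound, for each fixed slot, the probability that $\widehat{\Sigma}^{k,s}(x,t) < \rho_{r-1}\omega_{r-1}$ super-exponentially in $r$. The crucial input is Liggett's stochastic comparison between the SSEP and a system of independent random walks: for the monotone observable ``number of particles which at time $t$ sit in $[x,x+\omega_{r-1})$ and have never left $\mathring{\mathbf{B}}_r(k,s)$'', the SSEP count started from $\eta$ stochastically dominates the corresponding IRW count started from $\eta$. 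For the IRW, this count is a sum of independent indicators to which Bernstein's inequality \eqref{eqBernstein} applies, and a direct heat-kernel computation (using $\sqrt{\Delta_r} = N_0^{3r} \gg \omega_r = N_0^r$, the density lower bound $\rho_r$ on $\eta$, and the fact that $\rho_{r-1} < \rho_r$ by \eqref{defomegaDeltarhor}) shows that the mean exceeds $\rho_{r-1}\omega_{r-1}$ by a definite linear multiplicative factor. Bernstein's inequality then yields a tail bound of order $\exp(-c\,\omega_{r-1}) = \exp(-c\,N_0^{r-1})$, super-exponential in $r$ and unharmed by the polynomial union bound.

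The main obstacle is the IRW comparison step: I need to formulate precisely the monotone class to which Liggett's inequality applies for the ``survived in a region and is in window'' observable, verify the correct direction of the stochastic dominance, and carry out the mean estimate uniformly over admissible $(x,t)$ and over all initial configurations $\eta$ satisfying the density condition. The remaining ingredients---disjointness of superblocks, measurability via the graphical construction, the polynomial union bound, and Bernstein's inequality---are essentially routine once the comparison is in place.
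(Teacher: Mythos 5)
Your stage-one argument (disjointness of the modular superblocks, measurability via the graphical construction, a coupling/ordering reduction) is essentially the paper's, which conditions on $\mathcal{F}_{r+1}^s$, observes that for fixed $s$ the indicators $(\Upsilon_{r+1}(k,s))_{(k,s)\in I_a}$ are conditionally independent, and then couples progressively over time levels. Your phrasing suggests joint conditional independence given only the base restrictions, which needs the same time-ordered argument to be made rigorous, but the idea is sound.

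Stage two, however, has a genuine gap in the ``crucial input.'' You claim that the SSEP count of particles in a window (with a path-survival constraint) \emph{stochastically dominates} the corresponding IRW count. No such stochastic domination is known, and the direction is in fact wrong for the comparison Liggett's result actually provides. Liggett's Proposition 1.7 (Chapter VIII of \cite{Li85}) gives only that, for symmetric positive-definite test functions of the particle positions, the SSEP expectation is \emph{bounded above} by the IRW expectation at a fixed time; it is a moment inequality, not a stochastic ordering, and the inequality goes against you if you apply it to particles (you want the SSEP particle count to be large, but Liggett caps it from above by the IRW). The paper resolves this by switching to \emph{holes}: $\mathbb{P}_\eta(\Sigma_r^x(\xi_t)<\rho_r\omega_r) = \mathbb{P}_{\bar\eta}(\Sigma_r^x(\xi_t)>\bar\rho_r\omega_r)$, so that the needed bound is an \emph{upper}-tail bound on a hole count, and then applies Liggett to the exponential moment $\mathbb{E}_{\bar\eta}[\exp(\lambda\Sigma_r^x(\xi_t))]\le \mathbb{E}_{\bar\eta}[\exp(\lambda\Sigma_r^x(\xi^\circ_t))]$ followed by a Chernoff-type optimization (cf.\ \eqref{ISRWtoSSEPeq1}--\eqref{ISRWtoSSEPeq3}). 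Bernstein's inequality is used only inside the IRW computation, not to compare SSEP with IRW.

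Two further issues. First, you fold the ``stayed in $\mathring{\mathbf{B}}_r$'' constraint into the observable compared via Liggett, but Liggett's inequality concerns the fixed-time law of the particle configuration, not path-dependent functionals; there is no version of it for survival-constrained counts. The paper instead first bounds the unconstrained $\Sigma_r^x(\xi_t)$ and then separately shows $\widehat{\Sigma}_r^{k,s}(x,t)=\Sigma_r^x(\xi_t)$ with probability $\ge 1-Ce^{-\varepsilon\Delta_{r+1}}$ by controlling the boundary trajectories $y^\pm$ of the graphical representation (cf.\ \eqref{ISRWtoSSEPeq4}). Second, your claimed tail rate $\exp(-c\,\omega_{r-1})$ is too strong: because the density margins $\bar\rho_r/\bar\rho_{r+1}-1$ shrink like $\omega_r^{-1/4}$, the optimized Chernoff exponent is of order $\sqrt{\omega_r}$, not $\omega_r$ (the paper's \eqref{ISRWtoSSEPeq3}). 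This is still super-exponential in $r$ and does not affect the lemma's conclusion, but your stated heat-kernel computation would not deliver the rate you assert.
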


The proof of Lemma~\ref{lemma:stochdoma} requires quite a bit of work, 
including estimates for systems of simple random walks for comparison with the SSEP. 
Therefore, we postpone it to Section~\ref{subsubsec:proofmainlemmaa}, and show first how 
it is used to prove Proposition~\ref{mainprop}(a).

\subsubsection{Proof of Proposition~\ref{mainprop}(a)}
\label{subsubsec:proofmainpropa}

\begin{proof}
Let
\begin{equation}\label{pmainpropaeq1}
\begin{array}{rcl}
\Phi_r^{\mathrm{b}}(\ell) & := & \sup_{w \in \mathcal{W}_\ell} \#\{\text{bad } r\text{-blocks that intersect } w\},\\
\Psi_r^{\mathrm{ls}}(\ell) & := & \sup_{w \in \mathcal{W}_\ell} \#\{\text{locally spoiled } r\text{-blocks that intersect } w\}.
\end{array}
\end{equation}
Since $\Phi_r^\mathrm{r}(\ell) \le \Phi_r^\mathrm{b}(\ell)$, 
it is enough to prove that
\begin{equation}\label{pmainpropaeq2}
\lim_{r \to\infty}\limsup_{\ell \to \infty}\ell^{-1} \Delta^2_r \Phi_r^\mathrm{b}(\ell)=0.
\end{equation}
We claim that, for all $r \in \N$,
\begin{equation}\label{pmainpropaeq3}
\Phi_r^\mathrm{b}(\ell) \le N_0^{12}\Phi_{r+1}^{\mathrm{b}}(\ell) + N_0^{12}\Psi_{r+1}^{\mathrm{ls}}(\ell).
\end{equation}
Indeed, if an $r$-block is bad, then the unique $(r+1)$-block containing it is either bad or locally spoiled, 
and the number of $r$-blocks inside any given $(r+1)$-block is equal to $N_0^{12}$.
By induction we get, for $R \ge r+1$,
\begin{equation}\label{pmainpropaeq4}
\Delta^2_r\Phi_r^{\mathrm{b}}(\ell) \le \Delta^2_R \Phi_R^\mathrm{b}(\ell) + \sum_{n=r+1}^{R} \Delta^2_n \Psi_n^\mathrm{ls}(\ell).
\end{equation}
For $\kappa \in (0,(12\log(N_0))^{-1})$, take $\ell_0$ as in Lemma~\ref{lemma:nobigraref}
and $R=\lfloor \kappa \log(\ell) \rfloor$. 
Then, for $\ell \ge \ell_0$, we may estimate
\begin{equation}\label{pmainpropaeq5}
\frac{1}{\ell}\Delta^2_r\Phi_r^\mathrm{b}(\ell) \le \frac{1}{\ell} \sum_{n=r+1}^{\lfloor \kappa \log(\ell) \rfloor} \Delta^2_n \Psi_n^\mathrm{ls}(\ell),
\end{equation}
and so \eqref{pmainpropaeq2} follows from Lemma~\ref{lemma:stochdoma} and Proposition~\ref{prop:PPSseq}.
\end{proof}

The rest of this section is dedicated to the proof of Lemma~\ref{lemma:stochdoma}.
In Section~\ref{subsubsec:ISRW} we derive some estimates for systems of independent simple random walks.
These are used in Lemma~\ref{lemma:ISRWtoSSEP} below for comparison with the system of holes of the SSEP.
The latter lemma is used in Section~\ref{subsubsec:proofmainlemmaa} to prove Lemma~\ref{lemma:stochdoma}.

\subsubsection{Estimates for systems of independent random walks}
\label{subsubsec:ISRW}

It will be useful to compare the system $\bar{\xi}$ of the \emph{holes} of the exclusion process
with a system of independent simple random walks, which we define next.

Let $(S^{z})_{z \in \Z}$ be a collection of independent simple random walks on $\Z$, with $S^z_0 = z$ for each $z \in \Z$.
For $\eta \in \{0,1\}^{\Z}$, define the process $\xi^{\circ} = (\xi^{\circ}_t)_{t \ge 0}$ by
\begin{equation}\label{defISRW}
\xi^{\circ}_t(x) := \sum_{z\in \Z} \eta(z) \mathbbm{1}_{\{S^z_t = x\}}, \quad (x,t) \in \Z \times [0,\infty).
\end{equation}
The interpretation is that, if we launch from each site $z$ with $\eta(z)=1$ an independent simple random walk,
then $\xi^{\circ}_t(x)$ is the number of random walks present at the site $x$ at time $t$.

The following lemma states two estimates for $\Sigma_r^x(\xi^{\circ}_t)$, where $[x, x+ \omega_r)\times\{t\} \subset \mathcal{B}_{r+1}(0, 2\Delta_{r+1})$.
The first gives a bound on its exponential moments in terms of its first moment, 
while the second gives a bound on the first moment in terms of density properties of the initial configuration in the $(r+1)$-scale.

\begin{lemma}\label{lemma:ISRW}
Let $\eta \in \{0,1\}^{\Z}$ and $\xi^{\circ} = (\xi^{\circ}_t)_{t \ge 0}$ be a system of independent SRWs, as discussed above,
starting from $\bar{\eta}$ (recall \eqref{defbareta}).
Then the following hold:
\begin{enumerate}
\item[(i)] For any $\lambda > 0$, $x \in \Z$ and $t\ge0$,
\begin{equation}\label{ISRWeq1}
\mathbb{E}_{\bar{\eta}}\left[\exp({\lambda \Sigma_r^x(\xi^{\circ}_t)}) \right] 
\le \exp\left\{{(e^\lambda-1) \mathbb{E}_{\bar{\eta}} \left[\Sigma_r^x(\xi^{\circ}_t)\right]}\right\}.
\end{equation}
\item[(ii)] For large enough $r \in \N$ and any $(x,t) \in \mathcal{B}_{r+1}(0,2\Delta_{r+1})$, 
\begin{equation}\label{ISRWeq2}
\mathbb{E}_{\bar{\eta}} \left[\Sigma_r^x(\xi^{\circ}_t)\right] \le 1+\bar{\rho}_{r+1} \omega_r \; \text{ if } \mathbf{\Lambda}_{r+1}(0,2\Delta_{r+1}) \text{ is } (r+1)\text{-dense},
\end{equation}
i.e.,
$\sum_{y \in [x,x+\omega_{r+1})} \eta(y) \ge \rho_{r+1} \omega_{r+1}$ for any $x \in \Z$ such that 
$[x,x+\omega_{r+1}) \times \{0\} \subset \mathbf{\Lambda}_{r+1}(0,2\Delta_{r+1})$.
\end{enumerate}
\end{lemma}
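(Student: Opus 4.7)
For (i), the plan is immediate from the independence of $(S^z)_{z\in\Z}$. Writing
$$\Sigma_r^x(\xi^\circ_t) \;=\; \sum_{z\in\Z}\bar\eta(z)\,\mathbbm{1}_{\{S^z_t\in[x,x+\omega_r)\}}$$
exhibits $\Sigma_r^x(\xi^\circ_t)$ as a sum of independent $\mathrm{Bernoulli}(p_z)$ variables with $p_z:=\mathbb{P}(S^z_t\in[x,x+\omega_r))$, so its moment generating function factorises as $\prod_{z:\,\bar\eta(z)=1}\bigl(1+p_z(e^\lambda-1)\bigr)$. Applying $1+u\le e^u$ and noting that $\sum_z\bar\eta(z)p_z=\mathbb{E}_{\bar\eta}[\Sigma_r^x(\xi^\circ_t)]$ yields \eqref{ISRWeq1} directly.

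For (ii), I would start from the identity
$$\mathbb{E}_{\bar\eta}[\Sigma_r^x(\xi^\circ_t)] \;=\; \sum_{z\in\Z}\bar\eta(z)\,g(z), \qquad g(z):=\mathbb{P}_z(S_t\in[x,x+\omega_r)),$$
obtained by combining $\mathbb{E}_{\bar\eta}[\xi^\circ_t(y)]=\sum_z\bar\eta(z)p_t(z,y)$ with the symmetry $p_t(z,y)=p_t(y,z)$, and split the sum according to whether $z$ lies in the base window $\mathbf{V}_{r+1}^0=[-5\Delta_{r+1},6\Delta_{r+1})$. Since $(x,t)\in\mathcal{B}_{r+1}(0,2\Delta_{r+1})$ places $[x,x+\omega_r)$ at distance of order $\Delta_{r+1}$ from $\Z\setminus\mathbf{V}_{r+1}^0$ while $t<3\Delta_{r+1}$, a Gaussian tail bound for the SRW gives a negligible outside contribution,
$$\sum_{z\notin\mathbf{V}_{r+1}^0}g(z) \;=\; \sum_{y\in[x,x+\omega_r)}\mathbb{P}_y(S_t\notin\mathbf{V}_{r+1}^0) \;\le\; \omega_r\,e^{-c\Delta_{r+1}}$$
for some $c>0$ and all $r$ large enough.

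For the inside contribution, I would cover $\mathbf{V}_{r+1}^0$ by the disjoint $\omega_{r+1}$-windows $I_j=[j\omega_{r+1},(j+1)\omega_{r+1})$, which tile $\mathbf{V}_{r+1}^0$ exactly since $\omega_{r+1}$ divides $\Delta_{r+1}$. The $(r+1)$-density hypothesis applied to each $I_j$ gives $\sum_{z\in I_j}\bar\eta(z)\le\bar\rho_{r+1}\omega_{r+1}$, hence
$$\sum_{z\in I_j}\bar\eta(z)g(z) \;\le\; \bar\rho_{r+1}\omega_{r+1}\max_{z\in I_j}g(z) \;\le\; \bar\rho_{r+1}\Bigl(\sum_{z\in I_j}g(z)+\omega_{r+1}\operatorname{osc}_{I_j}(g)\Bigr).$$
Summing over $j$, using $\sum_{z\in\Z}g(z)=\omega_r$, and noting that $g$ is unimodal in $z$ (a moving sum of the unimodal SRW heat kernel $p_t(0,\cdot)$), so $\sum_j\operatorname{osc}_{I_j}(g)\le 2\|g\|_\infty$, bounds the inside contribution by $\bar\rho_{r+1}\omega_r + 2\bar\rho_{r+1}\omega_{r+1}\|g\|_\infty$.

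The main technical obstacle will be showing that the oscillation error $\omega_{r+1}\|g\|_\infty$ is $o(1)$. The local CLT for SRW gives $\|g\|_\infty\le C\omega_r/\sqrt{t}\le C\omega_r/\sqrt{\Delta_{r+1}}$, and the parameter scaling $\omega_r=N_0^r$, $\Delta_r=N_0^{6r}$ fixed in \eqref{defomegaDeltarhor} makes $\omega_{r+1}\omega_r/\sqrt{\Delta_{r+1}}=O(N_0^{-r})\to 0$. Combined with the exponentially small outside contribution, this yields $\mathbb{E}_{\bar\eta}[\Sigma_r^x(\xi^\circ_t)]\le\bar\rho_{r+1}\omega_r + o(1) \le 1+\bar\rho_{r+1}\omega_r$ for all sufficiently large $r$, completing the proof of (ii).
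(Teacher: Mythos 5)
Your proof of (i) is the same as the paper's: factorize the moment generating function over the independent walkers and apply $1+u\le e^u$. For (ii) you follow the same skeleton (outside/inside splitting, the $(r+1)$-density hypothesis applied window by window, the identity $\sum_{z}g(z)=\omega_r$ via reversibility), but your control of the error incurred by replacing $g$ with its maximum on each $\omega_{r+1}$-window is genuinely different. The paper first restricts to a window $A^x_t$ of width of order $\sqrt{t}\log t$ around $x$ (bounding the complement by the moderate-deviation estimate \eqref{SSRW1}) precisely so that the gradient estimate \eqref{SSRW2} yields an error of order $\omega_r\,|A^x_t|\,\omega_{r+1}/t\le\tfrac12$; on your full base window $\mathbf{V}^0_{r+1}$ that same gradient bound would give $\omega_r\Delta_{r+1}\omega_{r+1}/t\sim\omega_r\omega_{r+1}$, which is useless. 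You sidestep this with the telescoping bound $\sum_j\operatorname{osc}_{I_j}(g)\le 2\|g\|_\infty$ for unimodal $g$ together with the sup bound $\|g\|_\infty\le C\omega_r/\sqrt{t}$, giving an error $O(\omega_{r+1}\omega_r/\sqrt{\Delta_{r+1}})=O(N_0^{-r})$, and correspondingly your outside contribution needs only a crude large-deviation bound rather than \eqref{SSRW1}. The one point you should make explicit is the unimodality of $z\mapsto g(z)$: writing $q_t(n):=P(S^0_t=n)$, one has $g(z+1)-g(z)=q_t(x-z-1)-q_t(x+\omega_r-1-z)$, which changes sign exactly once because $q_t(n)=e^{-ct}I_{|n|}(ct)$ is non-increasing in $|n|$ (monotonicity of the modified Bessel functions). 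With that granted, your argument is correct and, if anything, slightly cleaner than the paper's, at the price of trading the standard difference estimate of Lawler--Limic for the unimodality of the heat kernel.
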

\begin{proof}
\textit{(i)} Using \eqref{defISRW}, we may write
\begin{align}\label{pISRWeq1}
\mathbb{E}_{\bar{\eta}}\left[\exp(\lambda \Sigma_r^x(\xi^{\circ}_t))\right]
& = \prod_{z \in \Z} E\left[e^{\lambda \bar{\eta}(z)\mathbbm{1}_{\{S^z_t \in [x,x+\omega_r)\}}} \right]\nonumber\\
& = \prod_{z \in \Z} \left\{ \bar{\eta}(z) (e^{\lambda}-1)  P(S^z_t \in [x,x+\omega_r)) + 1 \right\} \nonumber\\
& \le \prod_{z \in \Z} \exp\left(\bar{\eta}(z) (e^{\lambda}-1)  P(S^z_t \in [x,x+\omega_r))\right) \nonumber\\
& = \exp\left\{(e^{\lambda}-1)\mathbb{E}_{\bar{\eta}}\left[\Sigma_r^x(\xi^{\circ}_t)\right] \right\}.
\end{align}

\textit{(ii)} We recall two basic results for one-dimensional simple random walk: 
there exist $K_1,K_2 \in (0,\infty)$ such that
\begin{equation}\label{SSRW1}
P\left(|S_t^0| > 2\sqrt{t}\log t \right) \le K_1 e^{-K_2 (\log t)^2}, \;\; t \ge 1,
\end{equation}
and
\begin{equation}\label{SSRW2}
|P\left(S^y_t = z_1\right)-P\left(S^y_t = z_2\right)| \le K_1\frac{|z_1-z_2|}{t}, \;\; y,z_1,z_2 \in \Z, \; t \ge 1.
\end{equation}
The first of these can be verified e.g.\ with the help of Bernstein's inequality \eqref{Bernstein};
for the second, see e.g.\ Lawler and Limic \cite{LaLi10}, Theorem 2.3.5.

To simplify the notation, in the following we omit the coordinates $(0,2\Delta_{r+1})$ of the sets involved. 
Let $k_t := \lceil 2\sqrt{t}\log(t)/\omega_{r+1}\rceil$ 
and put $A_t^x := [x-k_t \omega_{r+1},x+(k_t+1)\omega_{r+1})$. 
Since $(x,t) \in \mathcal{B}_{r+1}$, we have $A_t^x \times \{0\} \subset \mathbf{\Lambda}_{r+1}$. 
Write
\begin{align}\label{pISRWeq2}
\mathbb{E}_{\bar{\eta}}\left[\Sigma_r^x(\xi^{\circ}_t)\right]
& =\sum_{z \in \Z} \bar{\eta}(z)P\left(S^z_t \in [x,x+\omega_r)\right) \nonumber\\
& \le \sum_{z \notin A_t^x} P\left(S^z_t \in [x,x+\omega_r)\right) +
\sum_{z \in A_t^x} \bar{\eta}(z)P\left(S^z_t \in [x,x+\omega_r)\right).
\end{align}
The first term in the r.h.s.\ of \eqref{pISRWeq2} can be estimated by
\begin{align}\label{pISRWeq3}
\sum_{y \in [x,x+\omega_r)}P\left(S^y_t \notin A_t^x\right)
\le \omega_r P\left(|S_t| > 2\sqrt{t}\log t\right) \le K_1\omega_r e^{-K_2 (\log \Delta_{r+1})^2} \le \frac{1}{2}
\end{align}
for $r$ large enough, where we use \eqref{SSRW1} and the fact that $t \ge \Delta_{r+1}$.
Decompose $A_t^x$ into disjoint intervals $I_1,\ldots,I_n$ with length exactly $\omega_{r+1}$, 
and let $z_i \in I_i$ be the maximizer of $z \mapsto P(S^z_t \in [x,x+\omega_r))$ in $I_i$. 
Then the second term in \eqref{pISRWeq2} is at most
\begin{align}\label{pISRWeq4}
\sum_{i=1}^n\sum_{z \in I_i} \bar{\eta}(z)P\left(S^{z_i}_t \in [x,x+\omega_r)\right)
& \le \bar{\rho}_{r+1} \omega_{r+1} \sum_{i=1}^n P\left(S^{z_i}_t \in [x,x+\omega_r)\right) \nonumber \\
& = \bar{\rho}_{r+1} \sum_{i=1}^n\sum_{z \in I_i} P\left(S^{z_i}_t \in [x,x+\omega_r)\right).
\end{align}
The last double sum in the r.h.s.\ of \eqref{pISRWeq4} is bounded by
\begin{equation}\label{pISRWeq5}
\sum_{z \in A_t^x} P \left(S^{z}_t \in [x,x+\omega_r)\right) 
\; + \sum_{y \in [x,x+\omega_r)}\sum_{i=1}^n\sum_{z \in I_i} |P\left(S^{z_i}_t =y\right)-P\left(S^z_t =y\right)|.
\end{equation}
The first term in \eqref{pISRWeq5} can be estimated by
\begin{equation}\label{pISRWeq6}
\sum_{y \in [x,x+\omega_r)}P \left(S^{y}_t \in A_t^x\right) \le \omega_r,
\end{equation}
and, via \eqref{SSRW2}, the second term in \eqref{pISRWeq5} by
\begin{align}\label{pISRWeq7}
\omega_r |A_t^x| K_1\frac{\omega_{r+1}}{t} 
& \le 4 K_1 \frac{\omega_r \omega_{r+1} \log(t)}{\sqrt{t}} + 3K_1 \frac{\omega_r\omega_{r+1}^2}{t} \nonumber\\
& \le \frac{4K_1}{N_0^{r-1}}\left\{ \log(3N_0^{6(r+1)}) + \frac{1}{N_0^{2r-1}} \right\} \le \frac{1}{2}
\end{align}
for large enough $r$, where for the second inequality we use that $\Delta_{r+1} \le t\le 3\Delta_{r+1}$.
Now \eqref{ISRWeq2} follows by combining \eqref{pISRWeq2}--\eqref{pISRWeq7} since $\bar{\rho}_{r+1} \le 1$.
\end{proof}

\subsubsection{Proof of Lemma~\ref{lemma:stochdoma}}
\label{subsubsec:proofmainlemmaa}

In this section we give the proof of Lemma~\ref{lemma:stochdoma}.
The first step is to compare $\bar \xi$ with a system of independent simple random walks
and use the estimates of Lemma~\ref{lemma:ISRW} to show that, if $\Lambda_{r+1}(k,s)$ is $(r+1)$-dense,
then it is extremely unlikely for $\mathcal{B}_{r+1}(k,s)$ to be $r$-rarefied. 
This will also imply that the probability to have a locally spoiled $B_{r+1}(k,s)$ is extremely low, 
since particles in the SSEP, with large probability, do not travel very large distances in a short time.
This is the content of Lemma~\ref{lemma:ISRWtoSSEP} below.

We will need the following $\sigma$-algebras:
\begin{equation}\label{defsigals}
\mathcal{F}_r^{s} := \sigma \big( \xi_t \colon\, t \in (-\infty,s-2\Delta_r]\big), \;\; r \in \N, \; s \in \Delta_r \Z.\\
\end{equation}

\begin{lemma}\label{lemma:ISRWtoSSEP}
There exist $C_1,C_2 \in (0,\infty)$ such that, 
for all $r\in\N$ large enough, $(k,s) \in \Delta_{r+1}\Z^2$ and $(x,t) \in \mathcal{B}_{r+1}(k,s) \cap \Z^2$,
if $\mathbf{\Lambda}_{r+1}(k,s)$ is $(r+1)$-dense, then
\begin{equation}\label{ISRWtoSSEPeq}
\mathbb{P}_{\nu_\rho}\left( \widehat{\Sigma}_r^{k,s}(s,t) < \rho_r \omega_r \mid \mathcal{F}_{r+1}^{s}\right)
\le C_1 e^{-C_2 \sqrt{\omega_r}}.
\end{equation}
\end{lemma}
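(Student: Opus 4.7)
The plan is to work through the graphical construction to decompose the deficit $\omega_r - \widehat{\Sigma}_r^{k,s}(x,t)$ into a hole count plus an ``escape'' count, and bound each. Let $\eta := \xi_{s-2\Delta_{r+1}}$, which is $\mathcal{F}_{r+1}^s$-measurable; under $\mathbb{P}_{\nu_\rho}(\cdot \mid \mathcal{F}_{r+1}^s)$ the graphical representation on $[s-2\Delta_{r+1},\infty)$ is built from an independent fresh family of Poisson arrows on $\eta$. For $y\in[x,x+\omega_r)$ let $\tau(y):=(\zeta^t_{s'}(y))_{s'\in[s-2\Delta_{r+1},t]}$ be the backward stirring path from $(y,t)$. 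Then $y$ contributes to $\widehat{\Sigma}_r^{k,s}(x,t)$ iff $\xi_t(y)=1$ and $\tau(y)$ never leaves the spatial interior $(k-5\Delta_{r+1},k+6\Delta_{r+1}-1)$, so that
\[
\omega_r - \widehat{\Sigma}_r^{k,s}(x,t) \;\le\; H + E,
\]
where $H:=\#\{y\in[x,x+\omega_r):\bar{\xi}_t(y)=1\}$ counts holes and $E:=\#\{y\in[x,x+\omega_r):\tau(y)\text{ exits the interior}\}$ counts escapes. Setting $\delta_r:=\bar{\rho}_r-\bar{\rho}_{r+1}=\bar{\rho}_r N_0^{-(r+1)/4}$, a union bound reduces the claim to
\[
\mathbb{P}_{\nu_\rho}\bigl(H>(\bar{\rho}_{r+1}+\tfrac{\delta_r}{2})\omega_r\mid\mathcal{F}_{r+1}^s\bigr)+\mathbb{P}_{\nu_\rho}\bigl(E>\tfrac{\delta_r}{2}\omega_r\mid\mathcal{F}_{r+1}^s\bigr) \;\le\; C_1 e^{-C_2\sqrt{\omega_r}}.
\]

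To bound the hole term, I would invoke Liggett's negative-correlation comparison: for symmetric exclusion started from a deterministic $\bar{\eta}$ and any finite $F\subset\Z$, $\mathbb{E}_{\bar{\eta}}[\prod_{y\in F}\bar{\xi}_t(y)]\le\mathbb{E}_{\bar{\eta}}[\prod_{y\in F}\xi^\circ_t(y)]$, where $\xi^\circ$ is the IRW system of \eqref{defISRW}. Expanding $\mathbb{E}[e^{\lambda H}\mid\mathcal{F}_{r+1}^s]=\mathbb{E}[\prod_{y\in[x,x+\omega_r)}(1+(e^\lambda-1)\bar{\xi}_t(y))\mid\mathcal{F}_{r+1}^s]$ as a sum over $F$ of products of $\bar{\xi}_t$-values with nonnegative coefficients for $\lambda\ge 0$ and applying the comparison termwise yields $\mathbb{E}[e^{\lambda H}\mid\mathcal{F}_{r+1}^s]\le\mathbb{E}_{\bar{\eta}}[\exp(\lambda\Sigma_r^x(\xi^\circ_t))\mid\mathcal{F}_{r+1}^s]$. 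Since $\mathbf{\Lambda}_{r+1}(k,s)$ is $(r+1)$-dense, Lemma~\ref{lemma:ISRW}(ii) gives mean $\le 1+\bar{\rho}_{r+1}\omega_r$ and Lemma~\ref{lemma:ISRW}(i) gives exponential-moment bound $\exp((e^\lambda-1)(1+\bar{\rho}_{r+1}\omega_r))$. A Chernoff bound optimized at $\lambda$ of order $N_0^{-(r+1)/4}$ against the threshold $(\bar{\rho}_{r+1}+\delta_r/2)\omega_r$ then yields a tail of order $\exp(-c\delta_r^2\omega_r)=\exp(-c\,\omega_r N_0^{-(r+1)/2})=\exp(-c\,N_0^{(r-1)/2})$, which is of the required form $e^{-C_2\sqrt{\omega_r}}$.

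For the escape term, the key observation is that marginally each single backward stirring path $\tau(y)$ is a continuous-time simple random walk at rate $2$, since at each moment only the arrows on the two edges incident to its current position affect it. For $y\in[x,x+\omega_r)\subset\mathcal{B}_{r+1}(k,s)$, exiting the spatial interior requires traversing a distance $\ge 4\Delta_{r+1}$ in time at most $3\Delta_{r+1}$, whose probability is bounded by $K_1 e^{-K_2\Delta_{r+1}}$ via the standard SRW maximal inequality (obtained e.g.\ by reflection plus Bernstein, as in \eqref{SSRW1}). Hence $\mathbb{E}[E\mid\mathcal{F}_{r+1}^s]\le\omega_r K_1 e^{-K_2\Delta_{r+1}}$, and Markov's inequality gives $\mathbb{P}(E>\delta_r\omega_r/2\mid\mathcal{F}_{r+1}^s)\le 2K_1\delta_r^{-1}e^{-K_2\Delta_{r+1}}$. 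Because $\Delta_{r+1}=N_0^{6(r+1)}$ swamps $\sqrt{\omega_r}=N_0^{r/2}$ while $\delta_r^{-1}$ grows only polynomially in $N_0^r$, this contribution is absorbed by the desired bound for all $r$ large enough. The main obstacle is the Liggett comparison step: one must verify that the negative-correlation inequality for SSEP applies for arbitrary deterministic initial conditions $\bar{\eta}$ (rather than only product measures) and check that it transfers cleanly from correlation functions to exponential moments via the termwise expansion; the escape estimate and the Chernoff optimization are then routine.
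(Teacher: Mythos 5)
Your proposal is correct and follows essentially the same route as the paper: reduce via the Markov property to a deterministic initial configuration, compare the SSEP hole count with a system of independent SRWs using Liggett's Proposition~1.7 (Chapter~VIII of \cite{Li85}) together with Lemma~\ref{lemma:ISRW}, and optimize the Chernoff bound at $\lambda \asymp \omega_r^{-1/4}$ to get a tail of order $e^{-c\sqrt{\omega_r}}$. The only genuine variation is in the treatment of escapes: you union-bound over the $\omega_r$ individual backward stirring paths (each a rate-$2$ SRW confined to a time window of length $\le 3\Delta_{r+1}$), whereas the paper introduces two barrier paths $y^\pm$ started at the spatial boundary of $\mathbf{B}_{r+1}$ and observes that $\widehat{\Sigma}_r = \Sigma_r$ on the event that neither reaches $\mathcal{B}_{r+1}$; both give a bound of order $e^{-c\Delta_{r+1}}$, which is negligible against $e^{-C_2\sqrt{\omega_r}}$. (A minor note: the paper applies Liggett's comparison directly to the exponential functional $\exp\{\lambda\sum_i \mathbbm{1}_{[x,x+\omega_r)}(y_i)\}$, which is symmetric and positive definite, rather than termwise to indicator products, and handles the infinite-particle issue by monotone convergence---your flagged concern is thus already resolved in the paper.)
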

\begin{proof}
By translation invariance and the Markov property, it is enough
to prove \eqref{ISRWtoSSEPeq} for $(k,s)=(0,2\Delta_{r+1})$ and under $\mathbb{P}_{\eta}$ for an 
arbitrary $\eta \in \{0,1\}^{\Z}$, under the assumption that $\mathbf{\Lambda}_{r+1}(0, 2\Delta_{r+1})$ 
is $(r+1)$-dense. We will first do this for $\Sigma_r^x(\xi_t)$. 

We claim that, for any $\eta \in \{0,1\}^\Z$ and $\lambda > 0$,
\begin{equation}\label{ISRWtoSSEPeq1}
\mathbb{E}_{\eta}\left[\exp({\lambda \Sigma_r^x(\xi_t)}) \right] 
\le \mathbb{E}_{\eta}\left[\exp({\lambda \Sigma_r^x(\xi^{\circ}_t)}) \right]
\end{equation}
where $\xi^\circ$ is a system of independent simple random walks as in Lemma~\ref{lemma:ISRW}.
This can be justified using a result due to Liggett \cite{Li85}, Chapter VIII, Proposition 1.7,
by noting that, for any $n \in \N$, the function 
$(y_1,\ldots,y_n) \mapsto \exp\lambda \sum_{i=1}^n \mathbbm{1}_{[x,x+\omega_r)}(y_i)$
is symmetric and positive definite. 
Liggett's result only applies to initial configurations with finitely many particles,
but, since $\Sigma_r^x(\xi_t)$ is monotone in $\eta$, 
\eqref{ISRWtoSSEPeq1} follows by the monotone convergence theorem.

Since $\bar{\xi}$ under $\mathbb{P}_{\eta}$ has the same distribution as $\xi$ under $\mathbb{P}_{\bar{\eta}}$,
we have, by Markov's inequality, \eqref{ISRWtoSSEPeq1} and Lemma~\ref{lemma:ISRW}, that, 
for any $\lambda >0$ and $r$ large enough,
\begin{align}\label{ISRWtoSSEPeq2}
\mathbb{P}_{\eta}\left(\Sigma_r^x(\xi_t) < \rho_r \omega_r \right) 
& = \mathbb{P}_{\bar \eta}\left(\Sigma_r^x(\xi_t) > \bar{\rho}_r \omega_r \right) \\
& \le \exp \left\{ \left(e^{\lambda}-1\right)(1 + \bar{\rho}_{r+1}\omega_r ) - \lambda \bar{\rho}_r \omega_r \right\} \nonumber\\
& = e^{e^\lambda-1}\exp \bar{\rho}_r \omega_r \left\{ \left(e^{\lambda}-1\right) \frac{\bar{\rho}_{r+1}}{\bar{\rho}_r} - \lambda \right\}.
\end{align}
Using $e^{\lambda}-1 \le \lambda e^\lambda$ and the definition of $\bar{\rho}_r$, 
we see that the term in brackets in the r.h.s.\ of \eqref{ISRWtoSSEPeq2} 
is at most $\lambda e^\lambda (\lambda-\omega_r^{-1/4})$. 
Choosing $\lambda = \frac12 \omega_r^{-1/4}$, we obtain
\begin{equation}\label{ISRWtoSSEPeq3}
\mathbb{P}_{\eta}\left(\Sigma_r^x(\xi_t) < \rho_r \omega_r \right) 
 \le e^{\sqrt{e}-1} \exp - \, \frac{ \bar{\rho}_+ \sqrt{e}\sqrt{\omega_r}}{4} 
 = \tilde{C}_1 e^{ -\tilde{C}_2\sqrt{\omega_r}}.
\end{equation}
To obtain \eqref{ISRWtoSSEPeq3} for $\widehat{\Sigma}_r(x,t)$ 
in place of $\Sigma_r^x(\xi_t)$ (with possibly different constants),
we will argue that the two are equal with a uniformly large probability.

To that effect, let $(y^-_t)_{t \ge 0}$ denote the path starting at time $0$ 
from the point $y^-_0:=-5\Delta_{r+1}$ that goes upwards in time and (forcibly) 
jumps across any arrows of the graphical representation to the right.
Likewise, let $(y^+_t)_{t \ge 0}$ denote the path that starts at $y^+_0 := 6\Delta_{r+1}-1$ 
and follows the arrows of the graphical representation to the left. 
We see that, on the event $A:=\{y^-,y^+ \text{ do not hit } \mathcal{B}_{r+1}\}$,
no particles can move from outside $\mathbf{B}_{r+1}$ into $\mathcal{B}_{r+1}$.
In particular, $\widehat{\Sigma}_r(x,t) = \Sigma_r^x(\xi_t)$ on $A$ 
if $[x,x+\omega_r)\times \{t\} \subset \mathcal{B}_{r+1}$.
On the other hand, $y^-_t - y^-_0$ and $y^+_0-y^+_t$ are both distributed 
as a rate $1$ Poisson process, and are independent of $\xi_0$;
therefore, because of the shape chosen for $\mathbf{B}_{r+1}$, we have
\begin{equation}\label{ISRWtoSSEPeq4}
\mathbb{P}_{\eta}(A) \ge 1 - C e^{-\varepsilon\Delta_{r+1}}
\end{equation}
for some $C, \varepsilon \in (0,\infty)$, which completes the proof.
\end{proof}

\begin{proof}[Proof of Lemma~\ref{lemma:stochdoma}]

If $\mathbf{\Lambda}_{r+1}(k,s)$ is $(r+1)$-dense, then
we may use \eqref{ISRWtoSSEPeq} to estimate
\begin{align}\label{pstochdomaeq1}
\mathbb{P}_{\nu_\rho}\left(\Upsilon_{r+1}(k,s) = 1 \mid \mathcal{F}_{r+1}^{s}\right)
&\le \sum_{(x,t) \in \mathcal{B}_{r+1}(k,s)\cap \Z^2} 
\mathbb{P}_{\nu_\rho}\left( \widehat{\Sigma}_r^{k,s}(s,t) < \rho_r \omega_r \mid \mathcal{F}_{r+1}^{k,s}\right) \nonumber \\
& \le C_1 6 \Delta_{r+1}^2 e^{-C_2 \sqrt{\omega_r}} =: p_{r+1},
\end{align}
which decays super-exponentially fast in $r$; in particular, $p_{r+1} < 1$ for large enough $r$. 
Since $\Upsilon_{r+1}(k,s) = 0$ if $\mathbf{\Lambda}_{r+1}(k,s)$ is $(r+1)$-rarefied,
\eqref{pstochdomaeq1} holds $\mathbb{P}_{\nu_{\rho}}$-a.s.. 

To conclude, fix $a \in \mathbf{B}_{r+1}(0,2\Delta_{r+1})$ and note that, 
by the definition of being locally spoiled, $\Upsilon_{r+1}(k,s)$ only 
depends on $\xi_{s-2\Delta_{r+1}}$ and on the graphical representation inside $\mathbf{B}_{r+1}(k,s)$.
Therefore, for fixed $s$, the collection
\begin{equation}\label{pstochdomaeq2}
\big(\Upsilon_{r+1}(k,s)\big)_{k \colon (k,s) \in I_a }
\end{equation}
is jointly independent under $\mathbb{P}_{\nu_\rho}(\cdot \mid \mathcal{F}_{r+1}^s)$.
Thus, by ordering any sequence $(k_i,s_i)\in I_a$, $i=1,\ldots,n$, such that $s_i \le s_j$ if $i\le j$, we see that,
by \eqref{pstochdomaeq1}, $(\Upsilon_{r+1}(k_i,s_i))_{i=1}^n$ can be progressively coupled in a monotone way to $n$ 
independent Bernoulli($p_{r+1}$) random variables. 
\end{proof}

\subsection{Proof of Proposition~\ref{mainprop}(b)}
\label{subsec:proofmainpropb}

In this section, we use the same proof strategy as in Section~\ref{subsec:proofmainpropa},
but the arguments will be technically much simpler.

Set $\epsilon_r := e^{-\Delta_r}$.
We call a point $(x,t) \in \Z \times \R$ $r$-\emph{stuck}
if both Poissonian clocks in the graphical representation that lie to the right and to the
left of $x$ fail to ring between times $t$ and $t+\epsilon_r$.
A subset of $\Z \times \R$ is called $r$-stuck if all its points with integer coordinates are $r$-stuck.
Note that $r$-turbulent blocks are not $r$-stuck.

Let $\Upsilon^{\mathrm{ns}}_r(k,s) := \mathbbm{1}_{\{ B_r(k,s) \text{ is not } r \text{-stuck} \}}$.
Set $I_{odd}:=\{(x,t)\in \Delta_r \Z^2 \colon\, x \Delta_r^{-1} \text{ is odd} \}$,
$I_{even}:=\{(x,t)\in \Delta_r\Z^2 \colon\, x \Delta_r^{-1} \text{ is even} \}$ and
$\mathcal{P}^{\mathrm{ns}}_r := \{I_{odd}, I_{even}\}$.

\begin{lemma}\label{lemma:stochdomb}
$\Upsilon^\mathrm{ns}_r$ is stochastically dominated by a $\tilde{p}_r$-PPS with
partition $\mathcal{P}^\mathrm{ns}_r$, where $\tilde{p}_r$ decays super-exponentially fast in $r$.
\end{lemma}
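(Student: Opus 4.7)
The plan is to bound the failure probability of a single block by a union bound over its integer points, and then to argue that the two-coloring $\mathcal{P}^{\mathrm{ns}}_r$ of $\Delta_r \Z^2$ separates blocks whose defining Poisson data are disjoint.

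First I would estimate $\mathbb{P}_{\nu_\rho}(\Upsilon^{\mathrm{ns}}_r(k,s) = 1)$. A single integer point $(x,t)$ fails to be $r$-stuck precisely when at least one of the Poisson processes $\mathcal{A}_{\{x-1,x\}}$ or $\mathcal{A}_{\{x,x+1\}}$ has an arrival in $(t, t + \epsilon_r]$, an event of probability at most $2(1 - e^{-\epsilon_r}) \le 2\epsilon_r$. Since $B_r(k,s)$ contains exactly $\Delta_r^2$ integer points, a union bound yields
\[
\mathbb{P}_{\nu_\rho}\bigl( \Upsilon^{\mathrm{ns}}_r(k,s) = 1 \bigr) \le 2 \Delta_r^2 \epsilon_r = 2 \Delta_r^2 e^{-\Delta_r} =: \tilde p_r,
\]
and because $\Delta_r = N_0^{6r}$ grows exponentially in $r$, $\tilde p_r$ decays super-exponentially.

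Next I would identify the Poisson data that drives $\Upsilon^{\mathrm{ns}}_r(k,s)$: it depends only on the restrictions of $\mathcal{A}_e$, for edges $e = \{j, j+1\}$ with $j \in \{k-1, k, \ldots, k + \Delta_r - 1\}$, to the time window $(s, s + \Delta_r - 1 + \epsilon_r]$. For any two distinct $(k_1, s_1), (k_2, s_2)$ in the same class $I_a$ of $\mathcal{P}^{\mathrm{ns}}_r$, one has either $|s_1 - s_2| \ge \Delta_r$, so that (once $r$ is large enough that $\epsilon_r < 1$) the corresponding time windows are disjoint, or $s_1 = s_2$ with $|k_1 - k_2| \ge 2 \Delta_r$, in which case the corresponding edge ranges are disjoint. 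Either way, the Poisson increments that determine $\Upsilon^{\mathrm{ns}}_r(k_1, s_1)$ and $\Upsilon^{\mathrm{ns}}_r(k_2, s_2)$ are disjoint.

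The conclusion would then follow by combining the two pieces: independence of $(\mathcal{A}_e)_{e \in \mathcal{E}}$ across edges and of disjoint-time increments of a single $\mathcal{A}_e$ implies that $(\Upsilon^{\mathrm{ns}}_r(k,s))_{(k,s) \in I_a}$ is jointly independent for each $a$, with each marginal Bernoulli of parameter at most $\tilde p_r$; a standard monotone coupling then dominates $\Upsilon^{\mathrm{ns}}_r$ by a $\tilde p_r$-PPS with partition $\mathcal{P}^{\mathrm{ns}}_r$. I do not expect any real obstacle here beyond the geometric bookkeeping of which edges and which time window feed into each block, so that the partition into just two classes — determined by the parity of $k/\Delta_r$ — is sufficient; this works because the spatial and temporal extents of $B_r(k,s)$ match $\Delta_r$ exactly, up to the negligible slack $\epsilon_r$.
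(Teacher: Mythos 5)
Your proof is correct and takes essentially the same approach as the paper: a union bound over the $\Delta_r^2$ integer points of the block gives $\tilde p_r = 2\Delta_r^2 e^{-\Delta_r}$, and the two-coloring $\mathcal P_r^{\mathrm{ns}}$ separates blocks whose defining Poisson data live in disjoint space-time regions. In fact your bookkeeping of the relevant edges ($j \in \{k-1,\ldots,k+\Delta_r-1\}$) and time window ($(s, s+\Delta_r-1+\epsilon_r]$) is tighter than what the paper writes, which appears to contain a copy-paste slip (it cites $y_1 = k-5\Delta_r-1$ and $y_2 = k+6\Delta_r$ and the time range $[s-2\Delta_r,s+\Delta_r]$, which are the boundary and extent of the superblock $\mathbf B_r(k,s)$, not of $B_r(k,s)$); the logic is unaffected since the cited set still makes the two-coloring work, but your version is the cleaner one.
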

\begin{proof}
By the definition of being $r$-stuck, we have
\begin{equation}\label{pnobigturbeq1}
\mathbb{P}_{\nu_\rho} \left( (x,t) \text{ is } r\text{-stuck} \right) = e^{-2\epsilon_r}.
\end{equation}
Therefore, for any $(k,s) \in \Delta_r \Z^2$,
\begin{equation}\label{pnobigturbeq2}
\mathbb{P}_{\nu_\rho}\left( B_r(k,s) \text{ is not } r \text{-stuck}\right)
\le \Delta_r^2 (1-e^{-2\epsilon_r}) \le 2\Delta_r^2e^{-\Delta_r},
\end{equation}
i.e., for each $(k,s)$, $\Upsilon^{\mathrm{ns}}_r(k,s)$ is stochastically dominated by
a Bernoulli($\tilde{p}_r$) random variable, 
where $\tilde{p}_r := 2\Delta_r^2e^{-\Delta_r}$ decays super-exponentially fast in $r$.
Note that $\Upsilon^{\mathrm{ns}}_r(k,s)$ only depends on the graphical representation inside 
$B_r(k,s) \cup \{y_1,y_2\}\times[s-2\Delta_r,s+\Delta_r]$, where $y_1 := k- 5 \Delta_r -1$ and $y_2 := k+6\Delta_r$
are the sites on the spatial boundary of $B_r(k,s)$. Therefore
$(\Upsilon^{\mathrm{ns}}_r(k,s))_{(k,s) \in I}$ are jointly independent if $I \in \{I_{odd}, I_{even}\}$, 
which finishes the proof.
\end{proof}

\begin{proof}[Proof of Proposition~\ref{mainprop}(b)]
Let
\begin{equation}\label{pmainpropbeq1}
\Phi^{\mathrm{ns}}_r(\ell):= \sup_{w \in \mathcal{W}_{\ell}} \#\{r\text{-blocks which intersect } w \text{ and are not } r\text{-stuck}  \}.
\end{equation}
Since $\Phi^\mathrm{t}_r(\ell) \le \Phi^{\mathrm{ns}}_r(\ell)$, it is enough to prove that
\begin{equation}\label{pmainpropbeq2}
\lim_{r \to \infty} \limsup_{\ell \to \infty} \ell^{-1} \Delta^2_r \Phi^{\mathrm{ns}}_r(\ell) = 0.
\end{equation}
But, for $\kappa \in (0, (6\log(N_0))^{-1})$ and $r \le \lfloor \kappa \log(\ell) \rfloor$,
\begin{equation}\label{pmainpropbeq3}
\frac{1}{\ell} \Delta^2_r \Phi^{\mathrm{ns}}_r(\ell) \le \frac{1}{\ell} \sum_{k = r}^{\lfloor \kappa \log(\ell) \rfloor} \Delta^2_k \Phi^{\mathrm{ns}}_k(\ell),
\end{equation}
so \eqref{pmainpropbeq2} follows from Lemma~\ref{lemma:stochdomb} and Proposition~\ref{prop:PPSseq}.
\end{proof}



\begin{thebibliography}{99}

\bibitem{Avthesis}
L.\ Avena,
\emph{Random Walks in Dynamic Random Environments},
PhD-thesis, Leiden University, 26 October 2010.

\bibitem{AvdSaVo12}
L.\ Avena, R.\ dos Santos and F.\ V\"ollering,
Transient random walk in symmetric exclusion: limit theorems and an Einstein relation, 
preprint 2012, arXiv:1102.1075v2.

\bibitem{ChTe88}
Y.S.\ Chow and H.\ Teicher,
\emph{Probability Theory: Independence, Interchangeability, Martingales}, 
2nd edition, Springer Texts in Statistics, Springer-Verlag, New York, 1988.

\bibitem{Gr99}
G.\ Grimmett,
\emph{Percolation}, 2nd edition,
Springer-Verlag, Berlin, 1999.

\bibitem{dHoKeSipr}
F.\ den Hollander, H.\ Kesten and V.\ Sidoravicius, 
Random walk in a high density dynamic random environment, 
preprint 2010.

\bibitem{dHodSapr}
F.\ den Hollander and R.\ dos Santos,
Scaling of a random walk on a supercritical contact process,
arXiv:1209.1511 [math.PR].

\bibitem{dSathesis}
R.S.\ dos Santos,
\emph{Some case studies of random walks in dynamic random environments},
PhD-thesis, Leiden University, December 2012.

\bibitem{KeSi03}
H.\ Kesten and V.\ Sidoravicius, 
Branching random walk with catalysts,
Electron. J. Probab. 8 (2003) no. 5, 51 pp.

\bibitem{KeSi05}
H.\ Kesten and V.\ Sidoravicius, 
The spread of a rumor or infection in a moving population,
Ann. Probab. 33 (2005) 2402--2462.

\bibitem{LaLi10}
G.F.\ Lawler and V.\ Limic,
\emph{Random Walk: a Modern Introduction},
Cambridge Studies in Advanced Mathematics 123, 
Cambridge University Press, Cambridge, 2010.

\bibitem{Li85}
T.M.\ Liggett, 
\emph{Interacting Particle Systems},
Grundlehren der Mathematischen Wissenschaften 276, Springer, New York, 1985.

\bibitem{Ze04}
O.\ Zeitouni, Random walks in random environment, in:
\emph{XXXI Summer School in Probability, Saint-Flour, 2001}, 
Lecture Notes in Math.\ 1837, Springer, 2004, pp.\ 193--312.

\end{thebibliography}
\end{document}